\theoremstyle{plain}
\newtheorem{theorem}{Theorem}[section]
\newtheorem{lemma}[theorem]{Lemma}
\newtheorem{corollary}[theorem]{Corollary}
\theoremstyle{definition}
\newcommand{\N}{\operatorname{N}}
\theoremstyle{remark}
\begin{document}

\title[The primitive curve complex for a handlebody]{The primitive curve complex for a handlebody}

\author[S. Cho]{Sangbum Cho}
\thanks{The first-named author is supported by the National Research Foundation of Korea(NRF) grant funded by the Korea government(MSIT) (NRF-2021R1F1A1060603).}
\address{Department of Mathematics Education,
Hanyang University, Seoul 04763, Korea, and School of Mathematics,
Korea Institute for Advanced Study, Seoul 02455, Korea}
\email{scho@hanyang.ac.kr}

\author[J. H. Lee]{Jung Hoon Lee}
\thanks{The second-named author is supported by the National Research Foundation of Korea(NRF) grant funded by the Korea government(MSIT) (RS-2023-00275419).}
\address{Department of Mathematics and Institute of Pure and Applied Mathematics,
Jeonbuk National University, Jeonju 54896, Korea}
\email{junghoon@jbnu.ac.kr}

\subjclass[2020]{Primary: 57K30}
\keywords{Handlebody, curve complex, primitive curve, dual disk}


\begin{abstract}
A simple closed curve in the boundary surface of a handlebody is called primitive if there exists an essential disk in the handlebody whose boundary circle intersects the curve transversely in a single point.
The primitive curve complex is then defined to be the full subcomplex of the curve complex for the boundary surface, spanned by the vertices of primitive curves.
Given any two primitive curves, we construct a sequence of primitive curves from one to the other one satisfying a certain property.
As a consequence, we prove that the primitive curve complex for the handlebody is connected.
\end{abstract}

\maketitle

\section{Introduction}\label{sec1}

Let $V$ be a genus-$g$ handlebody for $g \geq 2$, and let $\Sigma$ be the boundary of $V$, a closed orientable surface of the same genus.
A simple closed curve $C$ in $\Sigma = \partial V$ is called a {\em primitive curve} if there exists an essential disk $D$ in $V$ such that $C$ intersects $\partial D$ transversely in a single point.
Such a disk $D$ is called a {\em dual disk} of $C$.
Of course any primitive curve in $\Sigma$ admits infinitely many non-isotopic dual disks, and conversely any non-separating essential disk in $V$ can be a dual disk of infinitely many non-isotopic primitive curves.

Two primitive curves $C$ and $C'$ are said to be {\em separated} if there exist dual disks $D$ and $D'$ of $C$ and $C'$ respectively such that $C \cup D$ and $C' \cup D'$ are disjoint.
If two primitive curves $C$ and $C'$ are separated with their dual disks $D$ and $D'$, then one can find quickly their common dual disk by taking a ``band sum'' of $D$ and $D'$.
On the other hand, there are infinitely many of disjoint primitive curves $C$ and $C'$ that are not separated although they have a common dual disk (even though $C \cup C'$ is nonseparating in $\Sigma$).
The main result of this work is stated as follows.

\begin{theorem}\label{thm1.1}
Let $C$ and $C'$ be primitive curves in $\Sigma$, the boundary of a genus-$g$ handlebody $V$ for $g \ge 2$.
Then there exists a sequence $C = C_1, C_2, \ldots, C_n = C'$ of primitive curves in $\Sigma$ such that $C_i$ and $C_{i+1}$ are separated for each $i \in \{ 1, 2, \ldots, n-1 \}$.
\end{theorem}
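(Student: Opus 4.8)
The plan is to build the sequence greedily, one curve at a time: I keep $C'$ and a fixed dual disk $D'$ of $C'$ in place, and repeatedly replace the current primitive curve by a separated one that is ``closer'' to $C'$. The first observation is that for \emph{any} primitive curve $P$ and any dual disk $F$ of $P$, one can make $\partial F$ disjoint from $\partial D'$: whenever they meet, an innermost-disk/outermost-arc surgery of $F$ along $D'$ replaces $F$ by a disk whose boundary still meets $P$ in a single point but meets $\partial D'$ in strictly fewer points, and this disk is automatically essential, since a curve bounding a disk in $\Sigma$ cannot meet the closed curve $P$ in an odd number of points. So for a primitive curve $P$ put
\[
\mu(P)=\min_{F}\bigl(\,|P\cap\partial D'|+|\partial F\cap C'|+|P\cap C'|\,\bigr),
\]
the minimum over dual disks $F$ of $P$ with $\partial F\cap\partial D'=\emptyset$, all curves taken in minimal position. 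If $\mu(P)=0$, a minimizing $F$ shows $P\cup F$ and $C'\cup D'$ are disjoint, i.e. $P$ and $C'$ are separated. Thus Theorem~\ref{thm1.1} reduces to the following claim, from which the sequence is produced by starting at $C_1=C$ and iterating until $\mu$ (a nonnegative integer) reaches $0$, then appending $C'$.

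\emph{Claim.} If $P$ is primitive with $\mu(P)>0$, there is a primitive curve $P'$ with $P,P'$ separated and $\mu(P')<\mu(P)$.

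To prove the Claim I would fix a dual disk $F$ of $P$ realizing $\mu(P)$, with $\partial F\cap\partial D'=\emptyset$, and pass to the handlebody obtained by cutting $V$ along the non-separating disk $F$. Equivalently, $N(P\cup F)$ is a solid torus $T_P$ and $V_P:=V\setminus\mathrm{int}\,N(P\cup F)$ is a handlebody of genus $g-1\ge 1$ (so $V=T_P\natural V_P$ with $P$ a core of $T_P$), and every primitive curve of $V_P$ is primitive in $V$ and separated from $P$, since its dual disk lies in $V_P$, hence is disjoint from $P\cup F$. Since $\mu(P)>0$, at least one of $C'$, $\partial D'$ runs into $T_P$, or $C'$ meets $P$; using the positive genus of $V_P$ as ``room'', I would obtain $P'$ as a curve disjoint from $T_P$, built from $C'$ and $\partial D'$ by pushing the offending pieces back out across $P$ or $\partial F$ and performing a band/finger move that cancels exactly one intersection of the triple $(P\cap\partial D',\ \partial F\cap C',\ P\cap C')$, while keeping the result essential in $\Sigma$ and primitive in $V_P$; the corresponding band applied to $D'$ produces a dual disk of $P'$ lying in $V_P$, so $P$ and $P'$ are separated, and $\mu(P')<\mu(P)$.

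The hard part will be the construction in the previous paragraph, for two reasons. First, a disk that is non-separating in $V$ and disjoint from $F$ (or from $D'$) can become \emph{separating} after the cut, so the dual disk needed to carry out the band/finger move need not exist; in that case the move must be made against a different essential disk and only decreases a different coordinate of $\mu$, so a net decrease of $\mu$ is recovered only after inserting several auxiliary primitive curves — this is precisely why the conclusion is an unbounded sequence rather than a chain of bounded length. Second, one must verify throughout that the surgered curves remain essential in $\Sigma$ and that each candidate dual disk meets the new curve in exactly one point, arranging the innermost/outermost arguments so that their degenerate outcomes are exactly the already-separated cases; this bookkeeping, together with the genus case analysis, is the bulk of the work. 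Once the Claim is proved, Theorem~\ref{thm1.1} follows, and connectedness of the primitive curve complex is immediate, since separated primitive curves are disjoint and hence span an edge of the complex.
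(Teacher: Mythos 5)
There is a genuine gap: your argument reduces the whole theorem to the Claim, and the Claim is never proved. What you offer in its place is a description of the kind of move you would like to make (``pushing the offending pieces back out across $P$ or $\partial F$ and performing a band/finger move''), followed by an acknowledgement that this move may not exist and that the decrease of $\mu$ may only be recovered ``after inserting several auxiliary primitive curves'' --- which already contradicts the Claim as you state it. Even the preliminary observation is not justified: an outermost-arc surgery of $F$ along $D'$ produces two disks whose boundaries each consist of a subarc of $\partial F$ together with a subarc of $\partial D'$, and since $P$ may cross that subarc of $\partial D'$ many times, the resulting essential disk meets $P$ an \emph{odd} number of times, not necessarily once. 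So it need not be a dual disk of $P$, and it is not clear that the set over which $\mu(P)$ is minimized is nonempty.

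The place where the missing construction genuinely fails is $g=2$. There $V_P=V\setminus\mathrm{int}\,\N(P\cup F)$ is a solid torus, so there is no ``room'' in the sense you invoke, and the paper's introduction already points out that disjoint primitive curves with a common dual disk need not be separated --- i.e.\ configurations with small but nonzero $\mu$ from which no single cancelling move is available. The paper handles this by cutting along the common dual disk and running an induction on the parameters $(p,q)$ of the torus knot represented by $\alpha\cup\beta\cup D_+\cup D_-$ in the cut-open solid torus, descending through a Farey neighbour $(r,s)$ with $ps-qr=1$ and then to $(p-r,q-s)$; the intermediate primitive curves of the s-connecting sequence are exactly the curves realizing this continued-fraction descent. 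Nothing in your sketch produces such a descent, and for $g\ge 3$ the paper needs a different mechanism as well (auxiliary dual pairs placed in the genus-$(g-1)$ or genus-$(g-2)$ complement, Lemmas \ref{lem4.1}--\ref{lem4.3}), preceded by the purely combinatorial arc-surgery reduction of Section \ref{sec2}. In short, your monovariant correctly identifies when two curves are separated, but the heart of the theorem is the existence of the intermediate curves that let $\mu$ decrease, and that is exactly the part left unproved.
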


We first prove a weaker version of Theorem \ref{thm1.1}, in which $C_i$ and $C_{i+1}$ are only required to be disjoint and to have a common dual disk for each $i \in \{ 1, 2, \ldots, n-1 \}$ (Theorem \ref{thm2.4}), and then Theorem \ref{thm1.1} will be proved for the case of $g=2$ (Theorem \ref{thm3.1}) and for the case of $g \ge 3$ (Theorem \ref{thm4.4}).
In any case, the dual disks of $C$ and $C'$ can be chosen arbitrarily in the first line of the proof.

The curve complex $\mathcal C(\Sigma)$ for a closed orientable surface $\Sigma$ of genus $g$ with $g \geq 2$ is a simplicial complex defined as follows.
The vertices of $\mathcal C(\Sigma)$ are the isotopy classes of essential simple closed curves in $\Sigma$, and a collection of $k+1$ distinct vertices of $\mathcal C(\Sigma)$ spans a $k$-simplex if it admits a collection of representatives, all of which are pairwise disjoint.
The combinatorial structure of $\mathcal C(\Sigma)$ has been widely studied for the past decades.
In particular, it is well known that $\mathcal C(\Sigma)$ is a $(3g - 4)$-dimensional connected complex, and in fact it is homotopy equivalent to a wedge sum of spheres.
In this work, we are interested in a special subcomplex of $\mathcal C(\Sigma)$ when $\Sigma$ bounds a handlebody $V$, called the primitive curve complex.

The {\em primitive curve complex} $\mathcal{PC}(V)$ for $V$ is the full subcomplex of $\mathcal C(\Sigma)$ spanned by vertices whose representatives are primitive curves in $\Sigma$.
The following is a direct consequence of Theorem \ref{thm1.1} (it is enough to see that $C_i$ and $C_{i+1}$ are disjoint in the theorem).

\begin{corollary}\label{cor1.2}
The primitive curve complex $\mathcal{PC}(V)$ for a genus-$g$ handlebody $V$ is connected for every $g \ge 2$.
\end{corollary}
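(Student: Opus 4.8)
The plan is to obtain Corollary~\ref{cor1.2} directly from Theorem~\ref{thm1.1}, using the standard fact that a simplicial complex is connected precisely when its $1$-skeleton is. So the only thing to exhibit, for an arbitrary pair of primitive curves $C$ and $C'$, is an edge-path joining them inside $\mathcal{PC}(V)^{(1)}$. The key observation is what ``separated'' buys us: if $C_i$ and $C_{i+1}$ are separated, then by definition there are dual disks $D_i$ of $C_i$ and $D_{i+1}$ of $C_{i+1}$ with $C_i\cup D_i$ and $C_{i+1}\cup D_{i+1}$ disjoint, so in particular $C_i\cap C_{i+1}=\emptyset$. Both $C_i$ and $C_{i+1}$ are primitive, hence both are vertices of $\mathcal{PC}(V)$, and a disjoint pair of distinct vertices spans an edge of $\mathcal C(\Sigma)$; since $\mathcal{PC}(V)$ is the \emph{full} subcomplex on the primitive vertices, that edge lies in $\mathcal{PC}(V)$.

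First I would apply Theorem~\ref{thm1.1} to $C$ and $C'$ to get a finite sequence $C=C_1,C_2,\dots,C_n=C'$ of primitive curves with consecutive terms separated. By the observation above each consecutive pair either represents the same vertex or is joined by an edge of $\mathcal{PC}(V)$, so concatenating gives an edge-path in $\mathcal{PC}(V)^{(1)}$ from $C$ to $C'$. As the pair was arbitrary, $\mathcal{PC}(V)^{(1)}$, and therefore $\mathcal{PC}(V)$, is connected. I would also record in passing that $\mathcal{PC}(V)$ is nonempty: $V$ contains a non-separating essential disk, and as recalled in the introduction such a disk is a dual disk of infinitely many primitive curves, so there is at least one vertex to begin with.

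All the real content is thus in Theorem~\ref{thm1.1}, and that is where I expect the work to lie. The natural route, which the paper also follows, is to prove the weak form first: given $C$ and $C'$, choose any dual disks $D$ of $C$ and $D'$ of $C'$, put $D$ and $D'$ in minimal position in $V$, and induct on $|D\cap D'|$. When this number is positive one picks an outermost arc of $D\cap D'$ in $D'$, uses the subdisk it cuts off to surger $D$, and reads off from the resulting disk a new primitive curve disjoint from the previous one and sharing a dual disk with it, while reducing the intersection count; this yields Theorem~\ref{thm2.4}, a sequence of primitive curves with consecutive terms disjoint and admitting a common dual disk. Then one must strengthen ``disjoint with a common dual disk'' to ``separated''. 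A band sum immediately manufactures a common dual disk out of separated ones, so the substantive direction is the reverse: take two disjoint primitive curves with a common dual disk and interpolate separated ones, which is handled differently for $g=2$ (Theorem~\ref{thm3.1}) and $g\ge 3$ (Theorem~\ref{thm4.4}).

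The main obstacle I anticipate is precisely this last strengthening, and especially the genus-$2$ case: cutting $V$ along a non-separating disk leaves a once-punctured solid torus, so there is very little room, and one must carefully analyze the configurations of disjoint primitive curves sharing a dual disk that are \emph{not} themselves separated, inserting one or two auxiliary primitive curves to bridge each such pair. For $g\ge 3$ the extra handles should give enough space to push one curve-plus-disk configuration off another, making that case more routine once the genus-$2$ phenomena are understood.
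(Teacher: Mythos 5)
Your proof of the corollary is correct and is essentially the paper's own argument: the paper derives Corollary~\ref{cor1.2} from Theorem~\ref{thm1.1} by exactly the observation you make, that separated primitive curves are in particular disjoint, hence consecutive terms of the sequence span edges of the full subcomplex $\mathcal{PC}(V)$. Your additional remarks (reduction to the $1$-skeleton, nonemptiness of $\mathcal{PC}(V)$) are harmless elaborations, and your sketch of how Theorem~\ref{thm1.1} itself is proved matches the paper's actual strategy.
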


Understanding combinatorial structure of a simplicial complex like $\mathcal{PC}(V)$ is not only an interesting problem itself, but also it makes many useful applications.
For example, Wajnryb \cite{W} showed that “the cut-system complex” for a handlebody is simply connected, and found a finite presentation of the handlebody group by investigating the action of the group on the complex.
The handlebody group also acts on the primitive curve complex $\mathcal{PC}(V)$ in a natural way, and so $\mathcal{PC}(V)$ would be an alternate to find a (simpler) presentation of the group.

We are also interested in a subcomplex of $\mathcal{PC}(V)$, called the primitive disk complex, when the handlebody $V$ is standardly embedded in the $3$-sphere.
That is, when the closure $W$ of the complement of $V$ in the $3$-sphere is also a handlebody, which forms a Heegaard splitting of the $3$-sphere together with the handlebody $V$.
The {\em primtive disk complex} $\mathcal P(W)$ is then defined to be the full subcomplex of $\mathcal{PC}(V)$ spanned by the vertices of $\mathcal{PC}(V)$ whose representatives bound disks in the handlebody $W$.
If $\mathcal P(W)$ is a connected complex and moreover if it is also true a version of Theorem \ref{thm1.1} for the primitive disks in $W$, then one can show quickly that the reducing sphere complex for the splitting is also connected, which implies that the Powell conjecture is true \cite{Z}.

Throughout the paper, $\overline{X}$ will denote the closure of $X$ and $\N(X)$ a regular neighborhood of $X$ for a subspace $X$ of a space, where the ambient space will always be clear from the context.

\section{Primitive curves having a common dual disk}\label{sec2}

The goal of this section is to prove Theorem \ref{thm2.4}, a weaker version of our main theorem.
To avoid repeating the expression of ``sequence of curves'' in the arguments, we simply say that two primitive curves $C$ and $C'$ {\em are c-connected} if there exists a sequence $C = C_1, C_2, \ldots, C_k = C'$ of primitive curves for some $k$ such that $C_i$ and $C_{i+1}$ are disjoint and have a common dual disk for each $i \in \{ 1, 2, \ldots, k-1 \}$.
If all of $C = C_1, C_2, \ldots, C_k = C'$ admit a single common dual disk $D$, then we say that $C$ and $C'$ {\em are c-connected with the common dual disk $D$}.

\begin{lemma}[Intersecting in a single point]\label{lem2.1}
Let $C$ and $C'$ be primitive curves in $\Sigma$, the boundary of a genus-$g$ handlebody $V$ for $g \ge 2$.
Suppose that $C$ and $C'$ have a common dual disk $D$ and that $| C \cap C' | = 1$.
Then $C$ and $C'$ are c-connected with the common dual disk $D$.
\end{lemma}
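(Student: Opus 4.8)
The plan is to cut $\Sigma$ along $\partial D$, turning the statement into a two-dimensional problem about disjoint arcs, and then to produce the intermediate curve by a regular-neighborhood bookkeeping.

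First I would put $C$ and $C'$ in general position so that $C\cap C'$ is a single transverse point in the interior of $\Sigma\setminus\partial D$, and so that $|C\cap\partial D|=|C'\cap\partial D|=1$ with $C\cap\partial D\neq C'\cap\partial D$. Since $C$ meets $\partial D$ once, $\partial D$ is non-separating, and cutting $\Sigma$ along $\partial D$ yields a connected surface $\Sigma_0$ of genus $g-1$ with two boundary circles $\partial_1,\partial_2$, the two copies of $\partial D$. Then $C$ restricts to a properly embedded arc $c\subset\Sigma_0$ with one endpoint on $\partial_1$ and one on $\partial_2$, and likewise $C'$ restricts to such an arc $c'$ with $|c\cap c'|=1$. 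Conversely, any embedded arc in $\Sigma_0$ joining $\partial_1$ to $\partial_2$ closes up (by adding a short arc crossing $\partial D$ near where $C$ does) to a simple closed curve in $\Sigma$ meeting $\partial D$ transversely in one point, hence to a primitive curve with dual disk $D$. So it is enough to find an embedded arc $c''$ in $\Sigma_0$ joining $\partial_1$ to $\partial_2$, disjoint from $c\cup c'$, and agreeing with $c$ near $\partial D$; the corresponding curve $C''$ is then primitive with dual disk $D$, and, being parallel to $C$ near $\partial D$ while $c''$ misses $c$ and $c'$, is disjoint from both $C$ and $C'$, so that $C,\,C'',\,C'$ is the required sequence.

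To construct $c''$, I would cut $\Sigma_0$ successively along $c$ and then along the two sub-arcs into which the point $c\cap c'$ divides $c'$, recording the boundary at each stage. Cutting along $c$ merges $\partial_1$ and $\partial_2$ into a single boundary circle which, read once around, is $(\text{arc of }\partial_1)(\text{copy of }c)(\text{arc of }\partial_2)(\text{copy of }c)$; the two sub-arcs of $c'$ then have their four endpoints on this circle, one in the arc of $\partial_1$, one in the arc of $\partial_2$, and one in each copy of $c$. Cutting along these two sub-arcs and reading off the new boundary circles, one of them --- call it $\gamma$ --- carries an entire arc of $\partial_1$ together with an entire arc of $\partial_2$, namely those arcs of $\partial_1$ and of $\partial_2$ having the endpoints of $c$ among their endpoints. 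I then take $c''$ to run in a collar of $\gamma$ in the cut-open surface, from a point of that $\partial_1$-arc very close to the endpoint of $c$ to a point of that $\partial_2$-arc very close to the other endpoint of $c$. Viewed back in $\Sigma_0$, this $c''$ is an embedded arc from $\partial_1$ to $\partial_2$, disjoint from $c\cup c'$, and agreeing with $c$ near $\partial D$, as wanted.

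Closing $c''$ up as above produces a primitive curve $C''$ with dual disk $D$ that is disjoint from $C$ and from $C'$ (and distinct from each, since $C$ and $C'$ intersect), so $C,\,C'',\,C'$ shows that $C$ and $C'$ are c-connected with the common dual disk $D$. The step I expect to require the most care is the boundary bookkeeping in the third paragraph: one must verify that, regardless of how $c$ and $c'$ lie, some boundary circle of $\Sigma_0$ cut along $c\cup c'$ carries material from both $\partial_1$ and $\partial_2$. This is forced because $c$ --- and indeed each of $c$ and $c'$ --- joins the two distinct boundary circles of $\Sigma_0$; once this is in hand the rest is routine. This lemma treats the case $|C\cap C'|=1$, which one expects to serve as the base of an induction on $|C\cap C'|$ in the proof of Theorem \ref{thm2.4}.
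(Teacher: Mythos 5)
There is a genuine gap, and it is exactly at the step you flagged as delicate: the boundary bookkeeping in your third paragraph proves the wrong statement. Write $\partial_1\setminus\{c\cap\partial_1,\,c'\cap\partial_1\}=I\cup I'$ and $\partial_2\setminus\{c\cap\partial_2,\,c'\cap\partial_2\}=J\cup J'$. The regluing homeomorphism $\partial_1\to\partial_2$ matches one specific pair, say $I\leftrightarrow J'$ and $I'\leftrightarrow J$, and your arc $c''$ closes up to an embedded circle meeting $\partial D$ exactly once, disjoint from $C\cup C'$, only if its two endpoints lie on a \emph{matched} pair (otherwise the two endpoints sit on opposite sides of $C$ along $\partial D$, and any arc joining them either crosses $C$ or $C'$ or creates a second intersection with $\partial D$). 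If you carry out the cutting you describe and track the four segments, in every combinatorial configuration the three boundary circles of the cut-open surface distribute them as $\{I\},\{J\},\{I',J'\}$ or as $\{I'\},\{J'\},\{I,J\}$: the unique circle carrying material from both $\partial_1$ and $\partial_2$ carries a \emph{mismatched} pair. So the collar arc you construct never has correctly paired endpoints, and the argument does not produce a primitive curve with dual disk $D$.

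Moreover the defect is not repairable by routing $c''$ elsewhere: in the configuration of the paper's Case 1(a) (both $\alpha_+\cup\beta_+$ and $\alpha_-\cup\beta_-$ cut off disks $\Delta_+$, $\Delta_-$ meeting only at $p$), the segment of $\partial_1$ inside $\Delta_+$ is glued to the segment of $\partial_2$ \emph{outside} $\Delta_-$ and vice versa, so each matched pair is separated into different complementary components of $c\cup c'$. Hence no arc from $\partial_1$ to $\partial_2$ disjoint from $c\cup c'$ has matched endpoints, i.e.\ \emph{no} primitive curve with dual disk $D$ disjoint from both $C$ and $C'$ exists, and a length-three sequence $C, C'', C'$ is impossible in general. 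This is why the paper uses two intermediate curves: it first builds $\delta$ disjoint from $C'$ but meeting $C$ in one point, and then $\epsilon$ disjoint from both $C$ and $\delta$, giving the chain $C,\epsilon,\delta,C'$; a case analysis (whether the arcs $\alpha_\pm\cup\beta_\pm$ cut off disks, and how those disks sit) is needed to place $\delta$ and $\epsilon$. Your reduction to arcs in the cut-open surface is the right starting point, but the lemma genuinely requires a longer sequence.
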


\begin{proof}
Cutting $V$ along $D$, we have a genus-$(g-1)$ handlebody $V'$ with two copies $D_{+}$ and $D_{-}$ of $D$ on $\Sigma' = \partial V'$.
The primitive curve $C$ is then cut into an arc $\alpha$ with one of its endpoints $c_{+}$ in $\partial D_{+}$ and the other endpoint $c_{-}$ in $\partial D_{-}$.
The primitive curve $C'$ is also cut into an arc $\beta$ with its endpoints $c'_{+}$ and $c'_{-}$ in $\partial D_{+}$ and $\partial D_{-}$ respectively.
Let $p = \alpha \cap \beta$.
Let $\alpha_{+}$ and $\alpha_{-}$ be the subarcs of $\alpha$ with endpoints $\{ c_{+}, p \}$ and $\{ p, c_{-} \}$ respectively.
Let $\beta_{+}$ and $\beta_{-}$ be the subarcs of $\beta$ with endpoints $\{ c'_{+}, p \}$ and $\{ p, c'_{-} \}$ respectively.
First we consider the following special case.

\vspace{0.3cm}

\noindent Case $1$. Both $\alpha_{+} \cup \beta_{+}$ and $\alpha_{-} \cup \beta_{-}$ cut off disks $\Delta_{+}$ and $\Delta_{-}$ from $\overline{\Sigma' - D_{+}}$ and $\overline{\Sigma' - D_{-}}$ respectively.
There are two subcases (see Figure \ref{fig1} (a) and (b)).

\begin{enumerate}
\item[(a)] The two disks $\Delta_{+}$ and $\Delta_{-}$ intersect only in the point $p$.
\item[(b)] One of $\Delta_{+}$ and $\Delta_{-}$, say $\Delta_{-}$, contains $\Delta_{+}$.
\end{enumerate}

\begin{figure}[!hbt]
\includegraphics[width=14cm,clip]{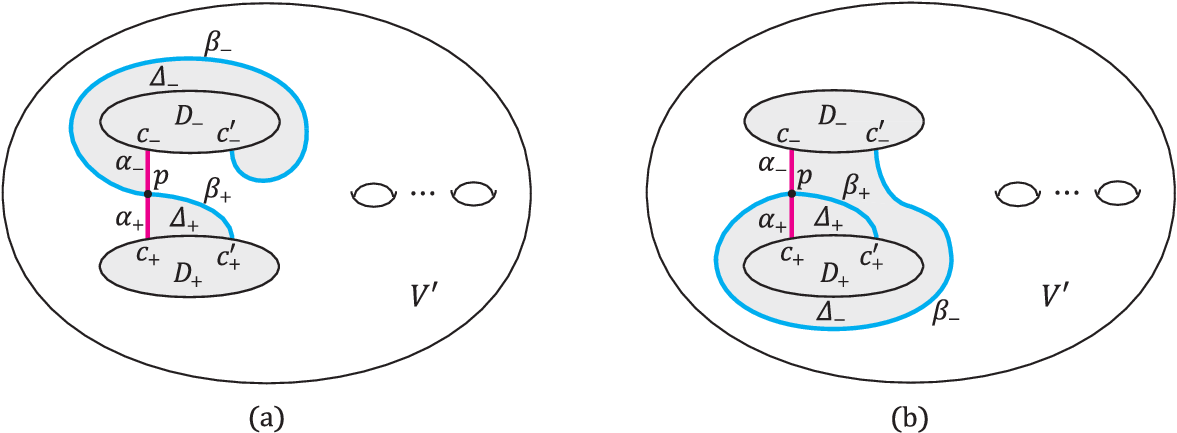}
\caption{(a) $\Delta_{+} \cap \Delta_{-} = p$ and (b) $\Delta_{+} \subset \Delta_{-}$.}
\label{fig1}
\end{figure}

\noindent (a) Take a point $d_{+}$ in $\partial D_{+} \cap \Delta_{+}$ between $c_{+}$ and $c'_{+}$.
Let $d_{-}$ be the point in $\partial D_{-}$ that is identified with $d_{+}$.
Then $d_{-}$ is not in $\Delta_{-}$.
Take a point $q$ in $\alpha_{+}$ between $c_{+}$ and $p$.
Let $\delta_{+}$ be an arc in $\Delta_{+}$ connecting $d_{+}$ and $q$.
Since $D_{+} \cup D_{-} \cup \Delta_{+} \cup \Delta_{-}$ is homotopy equivalent to a point, we can take a non-separating arc $\delta_{-}$ in $\overline{\Sigma' - (D_{+} \cup D_{-} \cup \Delta_{+} \cup \Delta_{-})}$ connecting $q$ and $d_{-}$ (see Figure \ref{fig2}).

\begin{figure}[!hbt]
\includegraphics[width=7cm,clip]{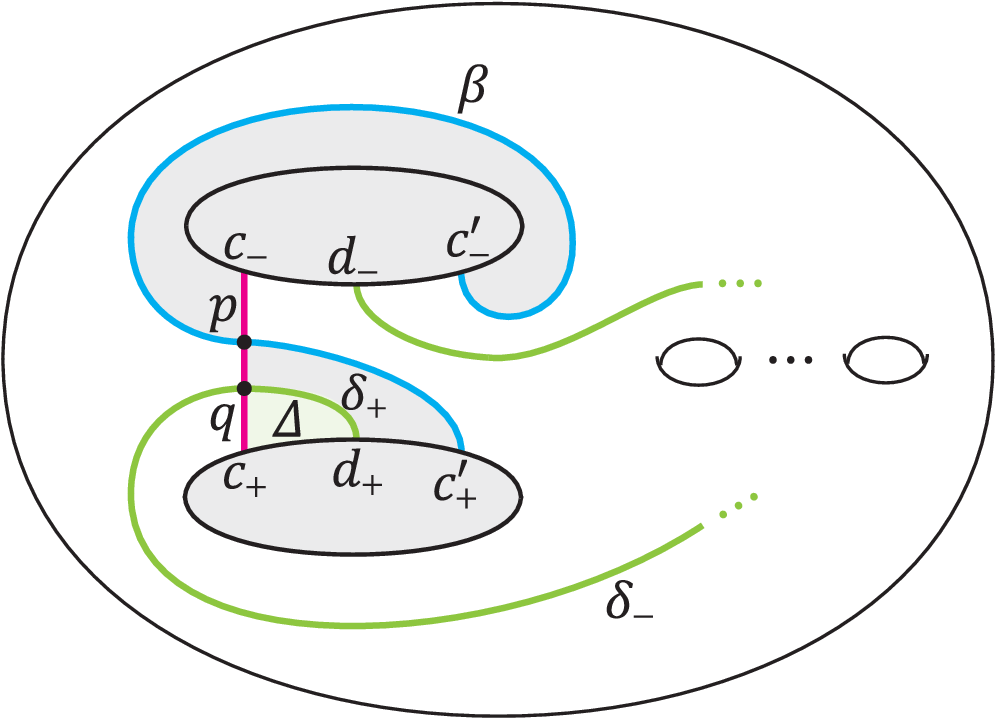}
\caption{The arc $\delta_{+}$ and $\delta_{-}$.}
\label{fig2}
\end{figure}

Let $\delta = \delta_{+} \cup \delta_{-}$.
If we identify $D_{+}$ and $D_{-}$, then $\delta$ becomes a primitive curve with a dual disk $D$.
The arc $\delta$ is disjoint from $\beta$, and $\delta$ intersects $\alpha$ in a single point $q$.
Let $\Delta$ be the triangular disk determined by $\{ q, c_{+}, d_{+} \}$.
Since $\delta_{-}$ is non-separating in $\overline{\Sigma' - (D_{+} \cup \Delta \cup \N{(\alpha)} \cup D_{-})}$, we can take an arc $\epsilon$ in $\overline{\Sigma' - (D_{+} \cup \Delta \cup \N{(\alpha)} \cup D_{-})}$ with endpoints $e_{+}$ and $e_{-}$ in $\partial D_{+}$ and $\partial D_{-}$ respectively with the following properties (see Figure \ref{fig3}).

\begin{itemize}
\item The arc $\epsilon$ is disjoint from $\alpha$ and $\delta$.
\item The two points $e_{+}$ and $e_{-}$ become the same point if we identify $D_{+}$ and $D_{-}$.
\end{itemize}

Then $\delta$ is c-connected to $\alpha$ via $\epsilon$ (if we regard the endpoints of each arc are identified in $V$) with a common dual disk $D$.

\begin{figure}[!hbt]
\includegraphics[width=7cm,clip]{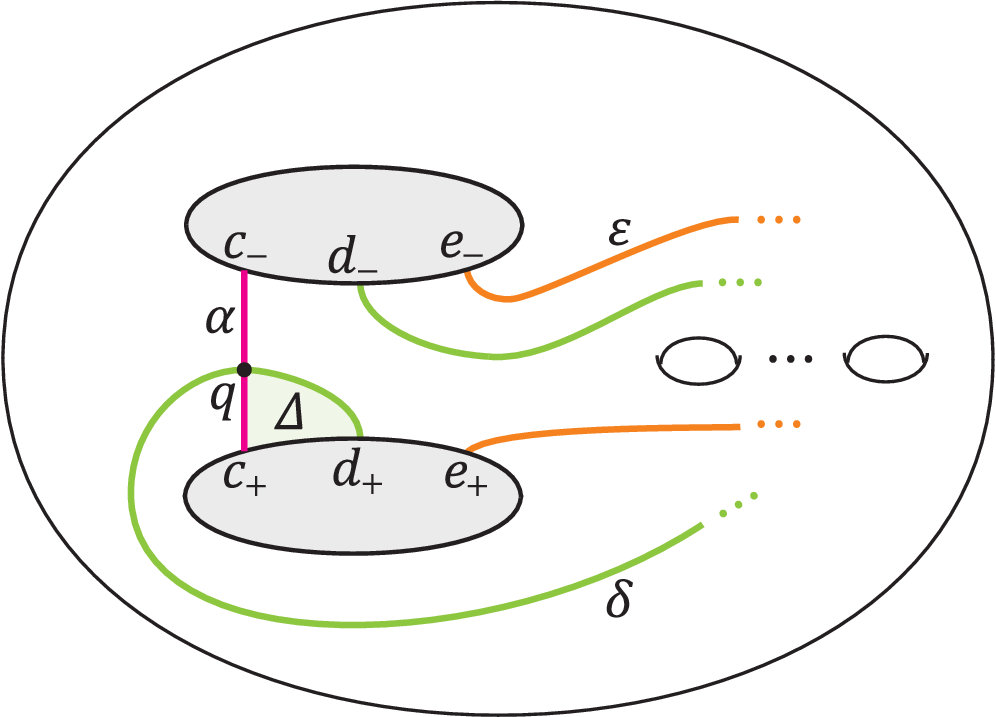}
\caption{The arc $\epsilon$.}
\label{fig3}
\end{figure}

\vspace{0.3cm}

\noindent (b) Take a point $d_{+}$ in $\partial D_{+} - \Delta_{+}$.
Let $d_{-}$ be the point in $\partial D_{-}$ that is identified with $d_{+}$.
Then $d_{-}$ is not in $\Delta_{-}$.
Take a point $q$ in $\alpha_{-}$ between $p$ and $c_{-}$.
Let $\delta_{+}$ be an arc in $\overline{\Delta_{-} - (D_{+} \cup \Delta_{+})}$ connecting $d_{+}$ and $q$.
Since $D_{-} \cup \Delta_{-}$ is just a disk, we can take a non-separating arc $\delta_{-}$ in $\overline{\Sigma' - (D_{-} \cup \Delta_{-})}$ connecting $q$ and $d_{-}$ (see Figure \ref{fig4}).

\begin{figure}[!hbt]
\includegraphics[width=7cm,clip]{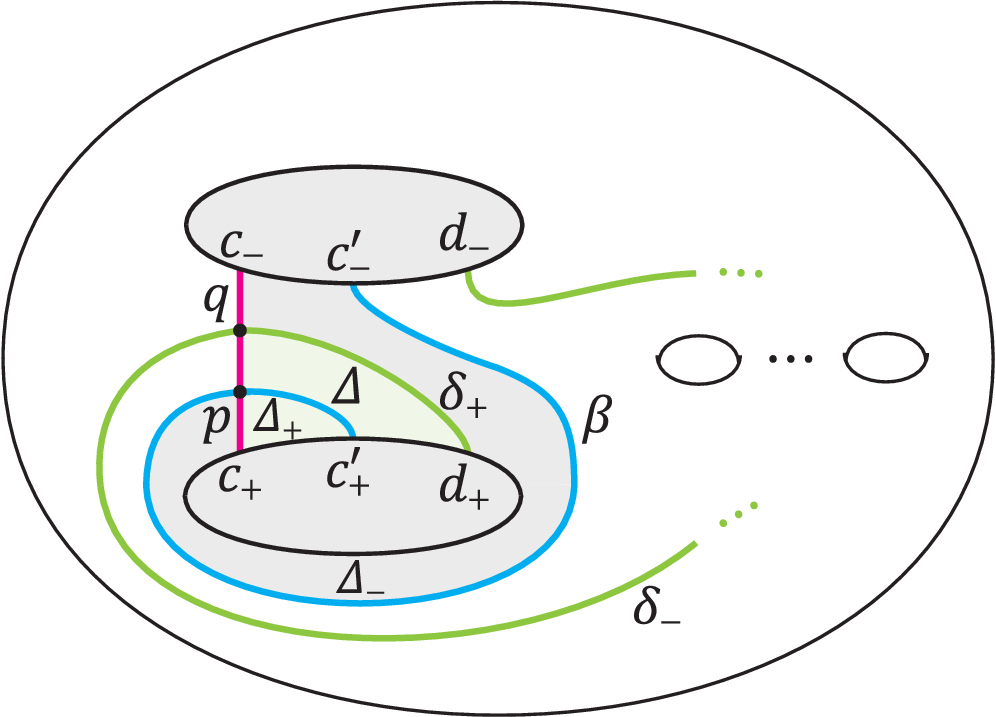}
\caption{The arc $\delta_{+}$ and $\delta_{-}$.}
\label{fig4}
\end{figure}

Let $\delta = \delta_{+} \cup \delta_{-}$.
If we identify $D_{+}$ and $D_{-}$, then $\delta$ becomes a primitive curve with a dual disk $D$.
The arc $\delta$ is disjoint from $\beta$, and $\delta$ intersects $\alpha$ in a single point $q$.
Now the situation for $\alpha$ and $\delta$ is the same as the case (a) above, so $\delta$ is c-connected to $\alpha$.

\vspace{0.3cm}

\noindent Case $2$. At least one of $\alpha_{+} \cup \beta_{+}$ and $\alpha_{-} \cup \beta_{-}$, say $\alpha_{+} \cup \beta_{+}$, does not cut off a disk from $\Sigma' - D_{+}$.

Without loss of generality, assume that $\beta_{+}$ is incident to $p$ in the right side of $\alpha$, and then take a point $d_{+}$ in $\partial D_{+}$ near $c_{+}$ and in the right side of $c_{+}$ as in Figure \ref{fig5}.
Take a point $q$ in $\alpha_{+}$ near $c_{+}$.
Let $\delta_{+}$ be a short arc connecting $d_{+}$ and $q$ such that the interior of $\delta_{+}$ is disjoint from $D_{+} \cup D_{-} \cup \alpha \cup \beta$.
Let $\delta_{-} =$ (the subarc of $\alpha_{+}$ from $q$ to $p) \cup \beta_{-}$.
Slide the endpoint of $\delta_{-}$ in $\partial D_{-}$ to the point $d_{-}$ that is identified with $d_{+}$ in $\Sigma$.
Then isotope $\delta_{-}$ slightly so that $\delta = \delta_{+} \cup \delta_{-}$ intersects $\alpha$ in a single point $q$, and $\delta$ is disjoint from $\beta$ (see Figure \ref{fig5}).

\begin{figure}[!hbt]
\includegraphics[width=7cm,clip]{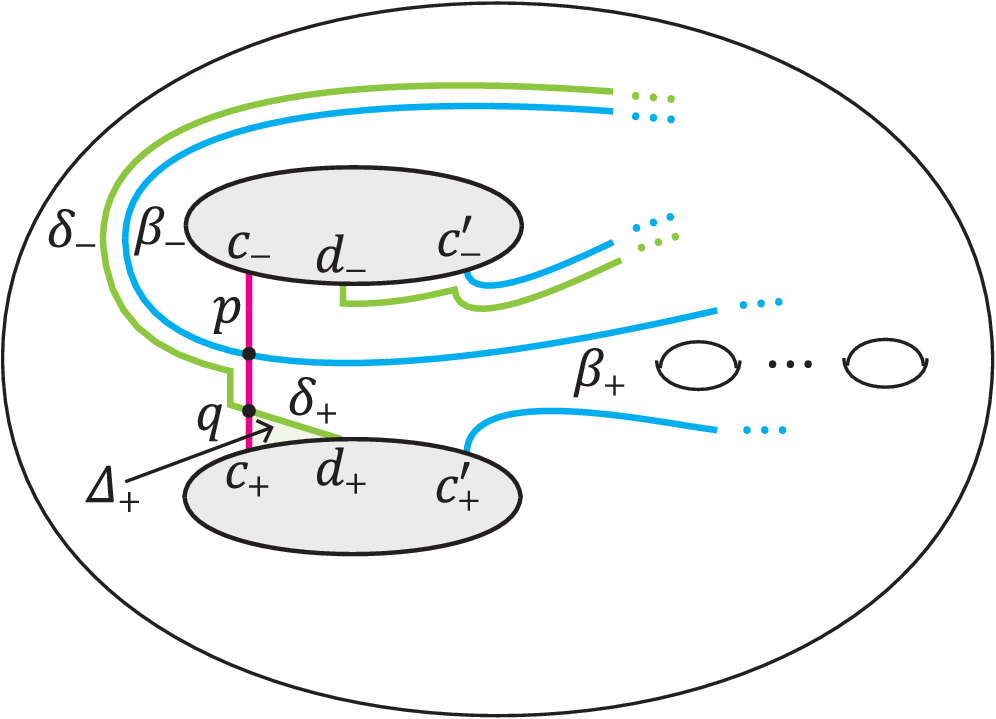}
\caption{The arc $\delta = \delta_{+} \cup \delta_{-}$.}
\label{fig5}
\end{figure}

Let $\alpha'_{+}$ be the subarc of $\alpha_{+}$ from $c_{+}$ to $q$, and $\alpha'_{-}$ be the subarc of $\alpha$ from $q$ to $c_{-}$.
The union $\alpha'_{+} \cup \delta_{+}$ cuts off a small disk $\Delta_{+}$ from $\overline{\Sigma' - D_{+}}$.
If $\alpha'_{-} \cup \delta_{-}$ also cuts off a disk from $\overline{\Sigma' - D_{-}}$, then we apply Case $1$ to $\alpha$ and $\delta$.
If $\alpha'_{-} \cup \delta_{-}$ does not cut off a disk from $\overline{\Sigma' - D_{-}}$, then we apply Case $2$ again to $\alpha$ and $\delta$ with respect to $\alpha'_{-} \cup \delta_{-}$.
\end{proof}

\begin{lemma}[A stronger version of Lemma \ref{lem2.1}]\label{lem2.2}
Let $C$ and $C'$ be primitive curves in $\Sigma$, the boundary of a genus-$g$ handlebody $V$ for $g \ge 2$.
Suppose that $C$ and $C'$ have a common dual disk $D$.
Then $C$ and $C'$ are c-connected with the common dual disk $D$.
\end{lemma}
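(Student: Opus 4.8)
The plan is to reduce Lemma \ref{lem2.2} to Lemma \ref{lem2.1} by an induction on the geometric intersection number $|C \cap C'|$, after first arranging $C$ and $C'$ so that they intersect minimally while still sharing the dual disk $D$. So the first step is to isotope $C$ and $C'$ (keeping $D$ as a common dual disk, i.e. keeping $|C \cap \partial D| = |C' \cap \partial D| = 1$) so that $C$ and $C'$ have no bigons in $\Sigma$; if $|C \cap C'| \le 1$ we are already done by Lemma \ref{lem2.1} (or trivially if the intersection is empty), so assume $|C \cap C'| = m \ge 2$ and proceed by induction on $m$.

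The core of the argument is to cut $V$ along $D$ exactly as in the proof of Lemma \ref{lem2.1}, obtaining a genus-$(g-1)$ handlebody $V'$ with copies $D_{+}, D_{-}$ of $D$ on $\Sigma' = \partial V'$, and with $C$, $C'$ cut into arcs $\alpha$, $\beta$ each running from $\partial D_{+}$ to $\partial D_{-}$, now meeting in $m$ points. The idea is to find a subarc of $\alpha$ and a subarc of $\beta$ that together bound a disk (a ``half-bigon'' or outermost region) in $\Sigma'$ and use it to build an intermediate primitive curve. Concretely, consider the $m$ points $\alpha \cap \beta$ along $\alpha$; an outermost one cuts off a subarc of $\beta$ that, together with a subarc of $\alpha$, either cuts off a disk from $\Sigma'$ or it does not. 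In the ``cuts off a disk'' case, one can push $\beta$ across that disk to produce a new primitive arc $\beta_1$ (with the same dual disk $D$, its endpoints still identified in $\Sigma$) which is disjoint from $\beta$, meets $\alpha$ in fewer points, and is c-connected to $\beta$ with common dual disk $D$ — this is essentially a finger-move/band argument and the verification that $\beta_1$ remains primitive with dual disk $D$ is where one must be careful. In the ``does not cut off a disk'' case, one argues as in Case 2 of Lemma \ref{lem2.1}: use a short arc near an endpoint on $\partial D_{+}$ to swap in part of $\beta$, producing an intermediate curve whose intersection with $\alpha$ is again reduced, and recurse. Either way, after finitely many steps we arrive at a primitive curve $C''$, c-connected to $C'$ with common dual disk $D$, such that $|C \cap C''| = 1$ (or $0$); then Lemma \ref{lem2.1} finishes the chain from $C$ to $C''$, and concatenating gives the c-connection from $C$ to $C'$ with common dual disk $D$.

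The main obstacle I expect is ensuring at each reduction step that the new curve $\beta_1$ is genuinely primitive \emph{with the given dual disk $D$} — i.e. that the band/finger move can be performed so that the new arc still crosses $D_{+} \cup D_{-}$ correctly (endpoints identified, one point of intersection with $\partial D$) — and that it is simultaneously disjoint from the previous curve \emph{and} strictly reduces the count $|\cdot \cap \alpha|$, so the induction actually terminates. This is precisely the kind of case analysis (depending on whether the outermost subarc-pair cuts off a disk, and on which side of $\alpha$ the relevant arc of $\beta$ approaches) that was carried out by hand in Lemma \ref{lem2.1}; here it must be packaged as an inductive step, with Lemma \ref{lem2.1} serving as the base case $m=1$. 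A secondary subtlety is that the intermediate curves are described in $V'$ as arcs with endpoints on $D_{\pm}$ that get identified in $V$; one must consistently check that ``disjoint and common dual disk $D$'' in $V$ corresponds to the disjointness and endpoint-matching conditions stated for the arcs in $\Sigma'$, exactly as in the proof above.
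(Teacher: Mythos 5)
Your overall shape (induct on $|C\cap C'|$, produce intermediate primitive curves, and use Lemma \ref{lem2.1} as the base case) matches the paper's, but the reduction step you describe does not work as stated, and the one idea that makes the paper's proof go through is missing. You propose to first put $C$ and $C'$ in minimal position (no bigons) and then, in the ``cuts off a disk'' case, to \emph{push} $\beta$ across that disk to get a curve meeting $\alpha$ in fewer points. A push is an isotopy: if the disk is an honest bigon between $C$ and $C'$ it contradicts your minimality assumption, and if it is a ``half-bigon'' with one side on $\partial D_{\pm}$, then sliding the endpoint of $\beta$ along $\partial D_{+}$ forces the identified endpoint on $\partial D_{-}$ to slide correspondingly, which creates a new intersection with $\alpha$ on the other side; the net intersection number with $\alpha$ does not drop. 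So your main reduction mechanism cannot strictly decrease the count, and your ``does not cut off a disk'' branch is left as an unexecuted analogy with Case 2 of Lemma \ref{lem2.1}.

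What the paper actually does is an \emph{arc surgery} that produces a genuinely new curve, not an isotope of $C$ or $C'$: take an arc $\alpha$ of $C'$ cut off by $C$ with interior disjoint from $C$, let $a,b=\partial\alpha$ split $C$ into $\gamma_1,\gamma_2$, and form $\gamma_1\cup\alpha$. The point you flag as ``the main obstacle'' --- that the new curve be primitive with the \emph{same} dual disk $D$ --- is resolved by the choice of $\gamma_1$: if $\alpha$ contains $p'=C'\cap\partial D$, take $\gamma_1$ to be the arc of $C$ \emph{not} containing $p=C\cap\partial D$; otherwise take the arc that \emph{does} contain $p$. Either way $\gamma_1\cup\alpha$ meets $\partial D$ in exactly one point, hence is primitive with dual disk $D$. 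The two cases then depend on whether $\alpha$ approaches $C$ from the same side at $a$ and $b$ (the surgered curve is disjoint from $C$ and meets $C'$ in at least two fewer points) or from opposite sides (it meets $C$ once --- handled by Lemma \ref{lem2.1} --- and meets $C'$ in at least one fewer point). Note also that the paper carries this out directly in $\Sigma$ without cutting along $D$, which avoids the endpoint-identification bookkeeping you were worried about. Without the surgery construction and the $\gamma_1$ selection rule, your proposal remains a plan with its central step unproved.
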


\begin{proof}
If $C \cap C' = \emptyset$, then we are done.
If $| C \cap C' | = 1$, then the result holds by Lemma \ref{lem2.1}.
So suppose that $| C \cap C' | \ge 2$.
Let $p = C \cap \partial D$ and $p' = C' \cap \partial D$.

Consider a subarc $\alpha$ of $C'$ cut off by $C$ such that the interior of $\alpha$ is disjoint from $C$.
Let $a$ and $b$ be the two endpoints of $\alpha$.
The two points $a$ and $b$ cut $C$ into two arcs $\gamma_1$ and $\gamma_2$.
If $\alpha$ contains $p'$, then let $\gamma_1$ be the arc that does not contain $p$.
If $\alpha$ does not contain $p'$, then let $\gamma_1$ be the arc that contains $p$.
So in any case, $\gamma_1 \cup \alpha$ intersects $\partial D$ in a single point.
There are two cases according to the sides that $\alpha$ is incident to $C$ at $a$ and $b$.

\vspace{0.3cm}

\noindent Case $1$. $\alpha$ is incident to $C$ in the same side of $C$ at $a$ and $b$.

By a surgery of $C$ along $\alpha$, we obtain a new primitive curve $C_2 = \gamma_1 \cup \alpha$ with a dual disk $D$.
After a slight isotopy, $C_2$ is disjoint from $C_1(= C)$, and $| C_2 \cap C' | \le | C_1 \cap C' | - 2$ (see Figure \ref{fig6}).

\begin{figure}[!hbt]
\includegraphics[width=7.5cm,clip]{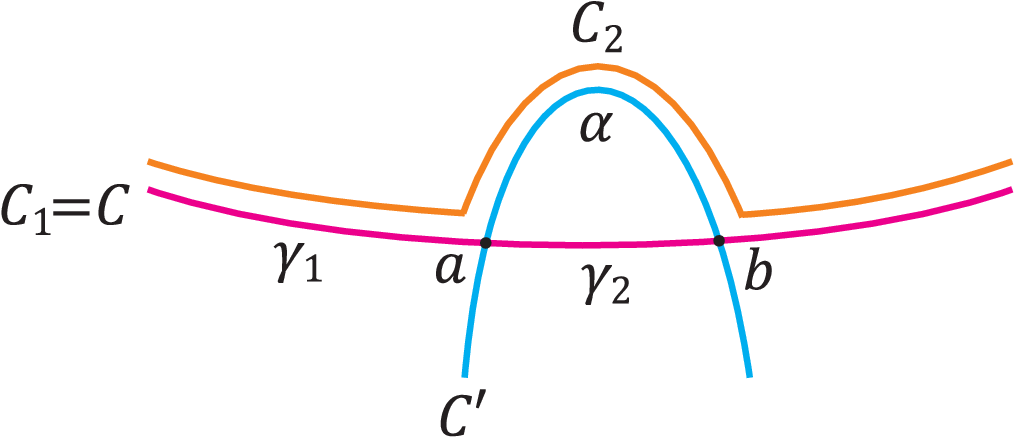}
\caption{The primitive curve $C_2$ in Case $1$.}
\label{fig6}
\end{figure}

\vspace{0.3cm}

\noindent Case $2$. $\alpha$ is incident to $C$ in the opposite sides of $C$ at $a$ and $b$.

By a surgery of $C$ along $\alpha$, we obtain a new primitive curve $\Gamma = \gamma_1 \cup \alpha $ with a dual disk $D$.
After a slight isotopy, $| \Gamma \cap C | = 1$, and $| \Gamma \cap C' | \le | C \cap C' | - 1$ (see Figure \ref{fig7}).
By Lemma \ref{lem2.1}, $C$ and $\Gamma$ are c-connected via primitive curves with a common dual disk $D$.

\begin{figure}[!hbt]
\includegraphics[width=7cm,clip]{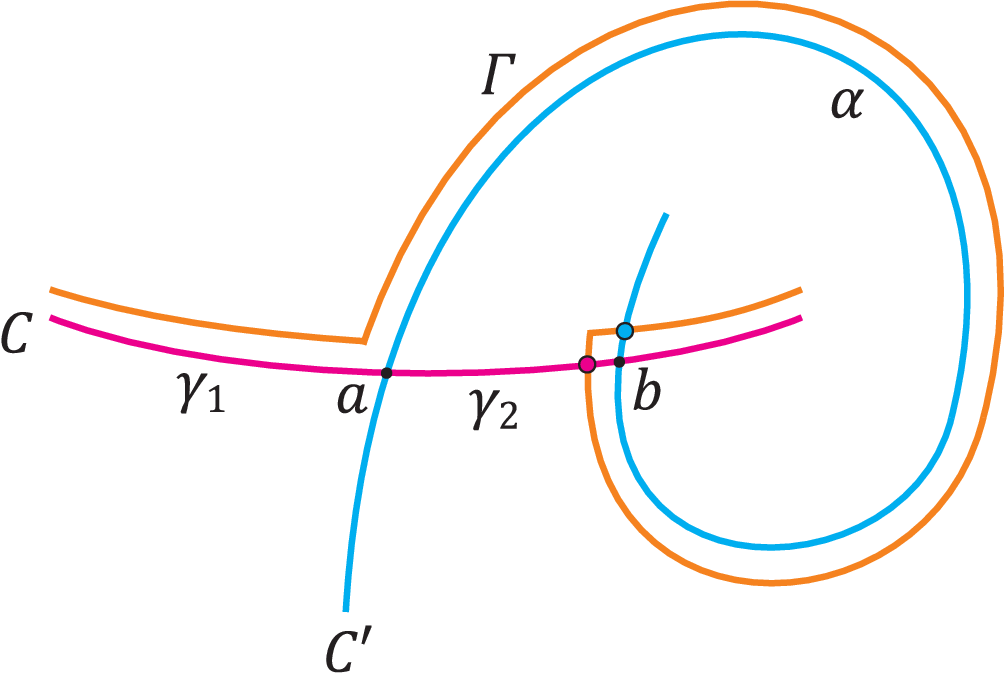}
\caption{The primitive curve $\Gamma$ in Case $2$.}
\label{fig7}
\end{figure}

By an inductive argument, we get a sequence of primitive curves with a common dual disk $D$, from $C$ to $C'$.
\end{proof}

A key idea in the next lemma is that we do arc surgery with respect to a pair of a primitive curve and a boundary curve of a dual disk.

\begin{lemma}[Arc surgery]\label{lem2.3}
Let $C$ and $C'$ be primitive curves in $\Sigma$, the boundary of a genus-$g$ handlebody $V$ for $g \ge 2$.
Let $D$ and $D'$ be dual disks of $C$ and $C'$ respectively.
Suppose that $D$ and $D'$ are disjoint.
Then there exists a primitive curve $C''$ in $\Sigma$ such that $| C'' \cap \partial D' | \le 1$, and $C$ and $C''$ are c-connected with the common dual disk $D$.
\end{lemma}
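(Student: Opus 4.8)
The plan is to induct on the geometric intersection number $n = |C \cap \partial D'|$, keeping $C$ and $\partial D'$ transverse throughout. The base case $n \le 1$ is immediate: take $C'' = C$, with the trivial one-term sequence. So assume $n \ge 2$.

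For the inductive step I would perform a single arc surgery of $C$ along $\partial D'$, following the pattern of Lemma~\ref{lem2.2} but with $\partial D'$ playing the role there played by $C'$. Since $n \ge 2$, choose a subarc $\alpha$ of $\partial D'$ whose interior is disjoint from $C$ and whose endpoints $a, b$ lie on $C$; these split $C$ into two arcs. Because $D$ and $D'$ are disjoint we have $\partial D \cap \partial D' = \emptyset$, so the point $p = C \cap \partial D$ lies in the interior of exactly one of the two arcs, say $\gamma_1$. Then $C_2 := \gamma_1 \cup \alpha$ is an embedded simple closed curve (its interior arc $\alpha$ misses $C$) meeting $\partial D$ transversely only at $p$; hence $C_2$ is essential and $D$ is still a dual disk of $C_2$. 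This last point --- that $D$ survives as a dual disk of the surgered curve --- is exactly where disjointness of $D$ and $D'$ is used, and it will persist through every surgery.

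Now split into two cases according to whether $\alpha$ is incident to $C$ on the same side or on opposite sides at $a$ and $b$, just as in Lemma~\ref{lem2.2}. If the sides agree, a slight isotopy makes $C_2$ disjoint from $C$ while pushing $\alpha$ off $\partial D'$, so that $|C_2 \cap \partial D'| < n$; then $C$ and $C_2$ constitute one step of a c-connection with common dual disk $D$, and I would recurse on $C_2$. If the sides are opposite, the same surgery and isotopy yield a curve $\Gamma := \gamma_1 \cup \alpha$ with $|\Gamma \cap C| = 1$, with dual disk $D$, and with $|\Gamma \cap \partial D'| < n$; since $C$ and $\Gamma$ then have the common dual disk $D$ and meet in a single point, Lemma~\ref{lem2.1} shows $C$ and $\Gamma$ are c-connected with common dual disk $D$, and I would recurse on $\Gamma$. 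In either case the intersection number with $\partial D'$ strictly decreases, so after finitely many steps one reaches a primitive curve $C''$ with $|C'' \cap \partial D'| \le 1$; concatenating all the c-connections --- each with common dual disk $D$ --- gives the required c-connection from $C$ to $C''$.

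The hard part will be the surgery bookkeeping in the opposite-sides case: verifying that $\Gamma$ can be isotoped to meet $C$ in exactly one point and to meet $\partial D'$ in strictly fewer points than $C$ does, while still meeting $\partial D$ exactly once. This is entirely local and parallels the corresponding argument in Lemma~\ref{lem2.2} --- indeed it is somewhat simpler here, since $\partial D \cap \partial D' = \emptyset$ removes the need to track where the ``other'' curve meets $\partial D$. The only genuinely new input is the observation that disjointness of $D$ and $D'$ keeps $D$ a dual disk of every curve produced along the way.
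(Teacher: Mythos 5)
Your proposal is correct and follows essentially the same route as the paper: induct on $|C\cap\partial D'|$, surger $C$ along an outermost-type subarc $\alpha$ of $\partial D'$, keep $\gamma_1$ containing $p=C\cap\partial D$ so that $D$ survives as a dual disk, and handle the same-side/opposite-side cases exactly as in Lemma~\ref{lem2.2}, invoking Lemma~\ref{lem2.1} in the opposite-side case. The only difference is that the paper additionally chooses $\alpha$ to avoid the point $C'\cap\partial D'$ so that the surgeries never increase $|C''\cap C'|$ --- a refinement not needed for the lemma as stated but used later in the proof of Theorem~\ref{thm2.4}.
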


\begin{proof}
If $| C \cap \partial D' | \le 1$, then we are done.
So suppose that $| C \cap \partial D' | \ge 2$.
Let $p = C \cap \partial D$ and $p' = C' \cap \partial D'$.

Consider a subarc $\alpha$ of $\partial D'$ cut off by $C$ such that the interior of $\alpha$ is disjoint from $C$.
In particular, we choose such an arc component $\alpha$ that does not contain $p'$.
(A choice like this will be used again in the proof of Theorem \ref{thm2.4}.)
Let $a$ and $b$ be the two endpoints of $\alpha$.
The two points $a$ and $b$ cut $C$ into two arcs $\gamma_1$ and $\gamma_2$, and let $\gamma_1$ be the arc that contains $p$.
There are two cases according to the sides that $\alpha$ is incident to $C$ at $a$ and $b$.

\vspace{0.3cm}

\noindent Case $1$. $\alpha$ is incident to $C$ in the same side of $C$ at $a$ and $b$.

By a surgery of $C$ along $\alpha$, we obtain a new primitive curve $C_2 = \gamma_1 \cup \alpha$ with a dual disk $D$.
After a slight isotopy, $C_2$ is disjoint from $C_1(= C)$, and $| C_2 \cap \partial D' | \le | C_1 \cap \partial D' | - 2$ (see Figure \ref{fig8}).

\begin{figure}[!hbt]
\includegraphics[width=8cm,clip]{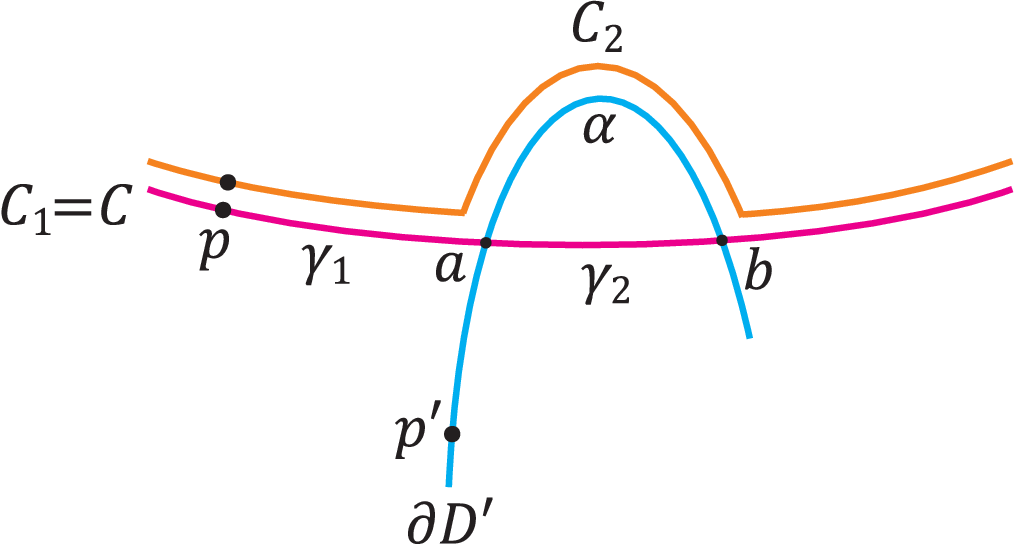}
\caption{The primitive curve $C_2$ in Case $1$.}
\label{fig8}
\end{figure}

\vspace{0.3cm}

\noindent Case $2$. $\alpha$ is incident to $C$ in the opposite sides of $C$ at $a$ and $b$.

By a surgery of $C$ along $\alpha$, we obtain a new primitive curve $\Gamma = \gamma_1 \cup \alpha $ with a dual disk $D$.
After a slight isotopy, $| \Gamma \cap C | = 1$, and $| \Gamma \cap \partial D' | \le | C \cap \partial D' | - 1$ (see Figure \ref{fig9}).
By Lemma \ref{lem2.1}, $C$ and $\Gamma$ are c-connected via primitive curves with a common dual disk $D$.

\begin{figure}[!hbt]
\includegraphics[width=7.5cm,clip]{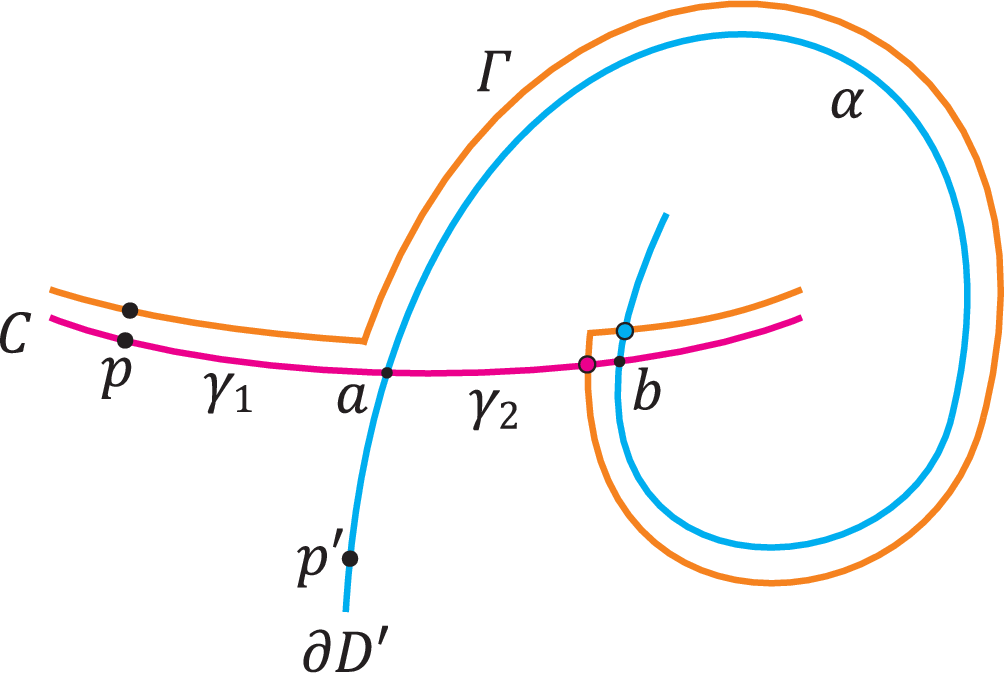}
\caption{The primitive curve $\Gamma$ in Case $2$.}
\label{fig9}
\end{figure}

Note that $| C_2 \cap C' | \le | C_1 \cap C' |$ in Case $1$ and $| \Gamma \cap C' | \le | C \cap C' |$ in Case $2$ since $\alpha$ does not contain $p'$.
By an inductive argument, we get a primitive curve $C''$ with a dual disk $D$ and $| C'' \cap \partial D' | \le 1$.
\end{proof}

Now we are ready to prove the following theorem, a weaker version of Theorem \ref{thm1.1}.

\begin{theorem}\label{thm2.4}
Let $C$ and $C'$ be primitive curves in $\Sigma$, the boundary of a genus-$g$ handlebody $V$ for $g \ge 2$.
Then $C$ and $C'$ are c-connected.
\end{theorem}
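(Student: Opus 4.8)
The plan is to fix dual disks $D$ of $C$ and $D'$ of $C'$, isotope them into minimal position so that $D\cap D'$ consists of arcs only, and argue by induction on the number $n=|D\cap D'|$ of these arcs. By Lemma~\ref{lem2.2} together with the transitivity of c-connectedness, it suffices at any stage of the argument to exhibit two primitive curves having a common dual disk, one c-connected to $C$ and the other c-connected to $C'$.

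For the base case $n=0$ I would run the arc-surgery lemma from the disjoint dual disks. Lemma~\ref{lem2.3} applied to $(C,C')$ with $(D,D')$ yields a primitive curve $C_1$, c-connected to $C$ with common dual disk $D$, for which $|C_1\cap\partial D'|\le 1$. If $|C_1\cap\partial D'|=1$ then $D'$ is a common dual disk of $C_1$ and $C'$ and Lemma~\ref{lem2.2} finishes. If $|C_1\cap\partial D'|=0$, apply Lemma~\ref{lem2.3} again, now to $(C',C_1)$ with the still-disjoint dual disks $(D',D)$, to get a primitive curve $C_2$, c-connected to $C'$ with common dual disk $D'$, with $|C_2\cap\partial D|\le 1$; if this number is $1$ then $D$ is a common dual disk of $C_1$ and $C_2$ and again Lemma~\ref{lem2.2} concludes. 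In the remaining configuration $C_1$ misses $\partial D'$ and $C_2$ misses $\partial D$ while $D,D'$ are disjoint, and I would produce a common dual disk of $C_1$ and $C_2$ directly --- for instance as a band sum of $D$ and $D'$ along an arc of $\Sigma$ disjoint from $C_1\cup C_2$, which then meets each of $C_1,C_2$ in exactly one point, once one checks that such a band exists and that the band sum can be taken essential.

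For the inductive step $n>0$ I would lower $n$ by an outermost-disk surgery on $D$. Choose an arc $a$ of $D\cap D'$ outermost in $D'$, cutting off a disk $D'_0\subset D'$ with $\partial D'_0=a\cup b$ and $b\subset\partial D'$; as in the proof of Lemma~\ref{lem2.3} one may choose $a$ so that $b$ avoids the point $p'=C'\cap\partial D'$. Outermostness forces the interior of $b$ to be disjoint from $\partial D$, so surgering $D$ along $D'_0$ produces an essential disk $E$ with $|E\cap D'|<n$ whose boundary is $b$ together with the arc of $\partial D$ containing $p=C\cap\partial D$. The disk $E$ fails to be a dual disk of $C$ only because $C$ may cross $b$; but an arc-surgery argument in the spirit of Lemma~\ref{lem2.3}, applied to the arc $b$ and the single dual disk $D$, lets me replace $C$ by a c-connected primitive curve with the same dual disk $D$ that avoids $b$. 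Because $b$ avoids $p'$, this correction does not raise $|C'\cap\partial D|$, and $E$ is then a dual disk of the new curve; so $n$ has strictly dropped and the inductive hypothesis applies.

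The step I expect to be the main obstacle is the bookkeeping around this correction: pinning down a single complexity --- some weighted combination of $|D\cap D'|$ with the intersection numbers of the current curves against the fixed disks --- that strictly decreases at every surgery so that the induction terminates, together with disposing of the degenerate subcase of the base case (checking the band sum there can be chosen essential and disjoint from $C_1\cup C_2$, including the low-genus instances).
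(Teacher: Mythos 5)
Your overall architecture matches the paper's: reduce to the case of disjoint dual disks, apply Lemma~\ref{lem2.3} in both directions to get curves $C_1$ (missing or once-meeting $\partial D'$) and $C_2$ (missing or once-meeting $\partial D$), dispose of the ``$=1$'' cases with Lemma~\ref{lem2.2}, and close the ``$0$--$0$'' case with a band sum of $D$ and $D'$. But the step you flag as needing a check is a genuine gap, and it is exactly where the paper does additional work. For the band sum $D\natural D'$ to be a common dual disk of $C_1$ and $C_2$, the band arc must run from $\partial D$ to $\partial D'$ missing $C_1\cup C_2$ entirely (each crossing of the band with a curve contributes two extra boundary intersections). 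When $|C_1\cap C_2|$ is large, $C_1\cup C_2\cup\partial D\cup\partial D'$ can separate $\partial D$ from $\partial D'$ in $\Sigma$, so no such arc exists and the construction fails as stated. The missing idea is a further round of arc surgery: surger $C_1$ along subarcs of $C_2$ cut off by $C_1$, always choosing subarcs avoiding the point $C_2\cap\partial D'$, so as to produce a curve $\Gamma$ c-connected to $C_1$ with dual disk $D$, with $|\Gamma\cap\partial D'|$ still $0$ and now $|\Gamma\cap C_2|\le 1$. Only then do $\Gamma\cup D$ and $C_2\cup D'$ meet in at most one point, and the band arc can be taken to run alongside $\Gamma$ and $C_2$ through that point (or through the complement when they are disjoint), making the band sum visibly a common dual disk.

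Your inductive step on $n=|D\cap D'|$ also replaces a much lighter move in the paper: by the standard outermost-disk surgery argument one gets a sequence of non-separating disks $D=D_1,\dots,D_n=D'$ with consecutive disks disjoint, and one simply chooses \emph{any} primitive curve $C_j$ dual to each intermediate $D_j$; there is no need to carry $C$ along the surgeries and correct it. If you insist on carrying $C$, note that your correction (arc surgery of $C$ against the arc $b$) can stall at $|C\cap b|=1$, since the two end subarcs of $b$ have an endpoint off $C$ and cannot be surgered; in that residual case the disk you want is the \emph{other} piece of the surgered $D$, whose boundary meets the corrected curve in $0+1=1$ points. These are repairable, but the band-sum issue above is the substantive omission.
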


\begin{proof}
Take any dual disks $D$ and $D'$ of $C$ and $C'$ respectively.
By the standard disk surgery argument, there exists a sequence $D = D_1, D_2, \ldots, D_n$ $= D'$ of non-separating disks in $V$ such that $D_j$ and $D_{j+1}$ are disjoint for each $j \in \{ 1, 2, \ldots, n-1 \}$.
Let $C_1 = C$ and $C_n = C'$.
Take a primitive curve $C_j$ for each $j \in \{ 2, \ldots, n-1 \}$ such that $D_j$ is a dual disk of $C_j$.

To show the statement of Theorem \ref{thm2.4} for $C$ and $C'$, it is enough to show the same statement for $C_j$ and $C_{j+1}$, and each $j \in \{ 1, 2, \ldots, n-1 \}$.
So without loss of generality, we assume that $C$ and $C'$ admit dual disks $D$ and $D'$ respectively such that $D$ and $D'$ are disjoint.

By Lemma \ref{lem2.3}, there exists a primitive curve $C''$ such that $C$ and $C''$ are c-connected with a common dual disk $D$ and $| C'' \cap \partial D' | \le 1$.
Similarly, there exists a primitive curve $C'''$ such that $C'$ and $C'''$ are c-connected with a common dual disk $D'$ and $| C''' \cap \partial D | \le 1$.
It remains to show that $C''$ and $C'''$ are c-connected.

Suppose that one of $| C'' \cap \partial D' |$ or $| C''' \cap \partial D |$, say $| C'' \cap \partial D' |$, is $1$.
Then $D'$ is a common dual disk of $C''$ and $C'''$.
By Lemma \ref{lem2.2}, $C''$ and $C'''$ are c-connected.

Now suppose that both $| C'' \cap \partial D' |$ and $| C''' \cap \partial D |$ are $0$.
We do an arc surgery for the pair $C''$ and $C'''$ to reduce $| C'' \cap C''' |$.
More precisely, we do a surgery of $C''$ along an arc component $\alpha$ of $C'''$ cut off by $C''$ such that the interior of $\alpha$ is disjoint from $C''$ and that $\alpha$ does not contain the point $C''' \cap \partial D'$ (as in the proof of Lemma \ref{lem2.3}).
Then we obtain a primitive curve $\Gamma_1$ satisfying $| \Gamma_1 \cap C''' | < | C'' \cap C''' |$ with a common dual disk $D$, and with an additional property that $| \Gamma_1 \cap \partial D' | = 0$ since $| \Gamma_1 \cap \partial D' | \le | C'' \cap \partial D' | = 0$ by the choice of $\alpha$.
We iterate such an arc surgery to obtain a primitive curve $\Gamma$ from $C''$ with a common dual disk $D$, and $| \Gamma \cap C''' | \le 1$, and $| \Gamma \cap \partial D' | = 0$.
The two primitive curves $C''$ and $\Gamma$ are c-connected (using Lemma \ref{lem2.1} if necessary).
See Figure \ref{fig10}.

\begin{figure}[!hbt]
\includegraphics[width=5.5cm,clip]{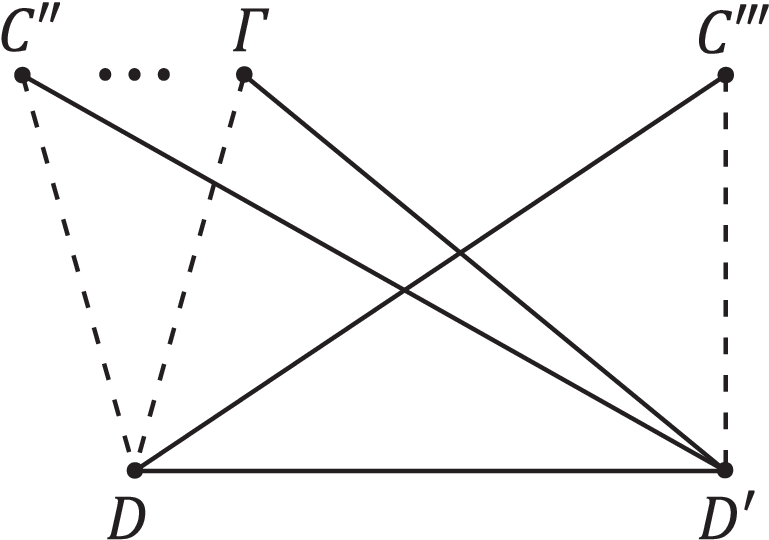}
\caption{$| (\Gamma \cup D) \cap (C''' \cup D') | \le 1$.}
\label{fig10}
\end{figure}

Since $| (\Gamma \cup D) \cap (C''' \cup D') | = | \Gamma \cap C''' | \le 1$, we can take a band sum of $D$ and $D'$ that is a common dual disk of $\Gamma$ and $C'''$.
See Figure \ref{fig11}.

\begin{figure}[!hbt]
\includegraphics[width=12cm,clip]{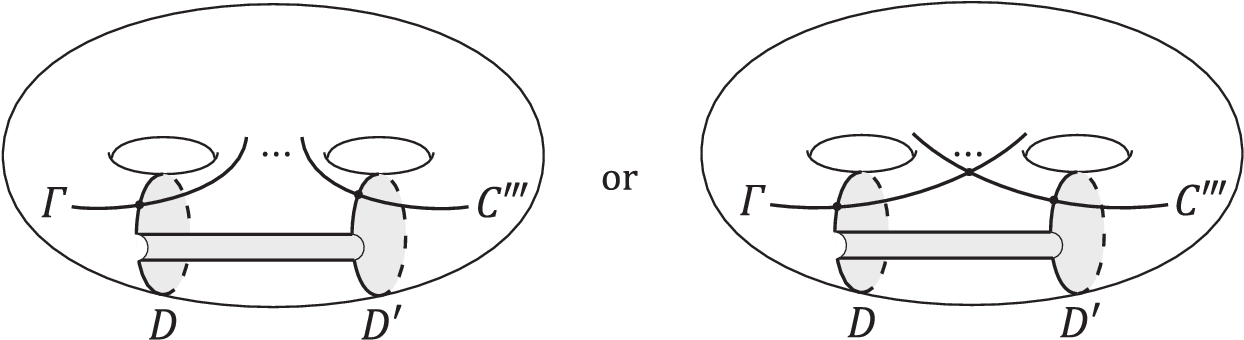}
\caption{A band sum of $D$ and $D'$ is a common dual disk of $\Gamma$ and $C'''$.}
\label{fig11}
\end{figure}
If $| \Gamma \cap C''' | = 0$, then $\Gamma$ and $C'''$ are c-connected obviously.
If $| \Gamma \cap C''' | = 1$, then $\Gamma$ and $C'''$ are again c-connected by Lemma \ref{lem2.1}.
\end{proof}

\section{Proof of the main theorem in the case of $g = 2$}\label{sec3}

For convenience, we simply say that two primitive curves $C$ and $C'$ {\em are s-connected} if there exists a sequence $C = C_1, C_2, \ldots, C_k = C'$ of primitive curves for some $k$ such that $C_i$ and $C_{i+1}$ are separated for each $i \in \{ 1, 2, \ldots, k-1 \}$.
If $C$ and $C'$ are s-connected, that is, $C_i$ and $C_{i+1}$ are separated with their disjoint dual disks $D_i$ and $D_{i+1}$ respectively, then we see that $C$ and $C'$ are also c-connected by taking a band sum of $D_i$ and $D_{i+1}$ that forms a common dual disk of $C_i$ and $C_{i+1}$.
We will prove the following, Theorem \ref{thm1.1} in the case of $g = 2$.

\begin{theorem}\label{thm3.1}
Let $C$ and $C'$ be primitive curves in $\Sigma$, the boundary of a genus-$2$ handlebody $V$.
Then $C$ and $C'$ are s-connected.
\end{theorem}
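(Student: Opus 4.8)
The plan is to bootstrap Theorem \ref{thm3.1} from Theorem \ref{thm2.4}. Being s-connected is transitive — one simply concatenates the witnessing sequences — so by applying Theorem \ref{thm2.4} to $C$ and $C'$ it is enough to prove the following special case: \emph{if $C$ and $C'$ are disjoint primitive curves admitting a common dual disk $D$, then $C$ and $C'$ are s-connected.} This is where $g = 2$ enters decisively: cutting $V$ along $D$ leaves a solid torus, so the ``complementary'' meridian disk is unique up to isotopy.

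Fix such $C$, $C'$, $D$, and cut $V$ along $D$ to obtain a solid torus $V'$ carrying copies $D_{+}, D_{-}$ of $D$ on $\Sigma' = \partial V'$; then $C$ and $C'$ become disjoint arcs $\alpha$ and $\beta$, each joining $\partial D_{+}$ to $\partial D_{-}$. The basic mechanism I would use to produce a single intermediate curve is the following: suppose there is a non-separating disk $M$ of $V$ together with a primitive curve $E$ having $M$ as a dual disk, such that $E \cup M$ is disjoint from $C \cup C' \cup D$. Then $D$ is a dual disk of both $C$ and $C'$, $M$ is a dual disk of $E$, and all the relevant unions are pairwise disjoint; hence $C$ and $E$ are separated and $E$ and $C'$ are separated, so $C, E, C'$ is an s-connecting sequence. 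Back inside the solid torus this asks for a meridian disk of $V'$ whose boundary avoids the $1$-complex $\alpha \cup \beta \cup \partial D_{+} \cup \partial D_{-}$ on the torus $\Sigma'$, plus a compatible dual arc, and whether such a configuration exists can be read off from how that $1$-complex sits on $\Sigma'$.

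When the meridian of $V'$ can be placed disjointly from $\alpha \cup \beta \cup \partial D_{+} \cup \partial D_{-}$ and a dual arc is available, we are done. The remaining, and typical, case is the obstructed one — for instance, after re-identifying $D_{+}$ with $D_{-}$, this occurs whenever $C$ and $C'$ wrap the same number of times around $V'$, and there $C$ and $C'$ are genuinely not separated. To treat it I would induct on a complexity measuring how badly the meridian is obstructed, say the minimum of $|\partial M \cap (C \cup C')|$ over all meridian disks $M$ of $V'$ with $\partial M$ kept off $\partial D$. Taking an outermost subarc of such a $\partial M$ cut off by $C$ or by $C'$, I would perform the arc surgeries of Section \ref{sec2} — always surgering in the complement of $\partial D$ so that the new curve is again primitive with dual disk $D$, and always choosing the surgery arc so as not to increase $|C \cap C'|$ — to replace $C$ (resp.\ $C'$) by primitive curves of strictly smaller complexity that are c-connected to the original with common dual disk $D$. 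By the inductive hypothesis these c-connections are in fact s-connections, and iterating reaches a configuration handled in the previous paragraph.

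The step I expect to be the main obstacle is arranging this induction so that it genuinely terminates. Lowering $|\partial M \cap (C \cup C')|$ by itself does not obviously suffice: one may have that quantity equal to $0$ and still be unable to exhibit the intermediate curve $E$, so that the common dual disk itself must be allowed to change — replacing $D$ by a disjoint non-separating disk and passing through primitive curves dual to that, which reintroduces the disk-surgery sequence underlying Theorem \ref{thm2.4}. Controlling this interplay between moving the dual disk and reducing intersections with the meridian, and verifying that every intermediate curve really is primitive and essential, is where the bulk of the care goes; the concrete tool for closing the induction should be explicit short s-connecting sequences in the obstructed cases, linking a primitive curve to a ``twisted'' version of itself through an auxiliary curve that runs over the other handle.
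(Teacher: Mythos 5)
Your first step---invoking Theorem \ref{thm2.4} and transitivity to reduce to the case where $C$ and $C'$ are disjoint with a common dual disk $D$, then cutting along $D$ to view everything in a solid torus $V'$---is exactly how the paper begins. But from there the proposal does not actually prove the theorem. Your ``basic mechanism'' (an intermediate pair $E\cup M$ disjoint from $C\cup C'\cup D$, with $M$ a meridian disk of $V'$) essentially never fires: a regular neighborhood of $\alpha\cup\beta\cup D_{+}\cup D_{-}$ in the torus $\Sigma'$ is an annulus whose core carries the slope $(p,q)$ of that union, so its complement is an annulus of the same slope; hence a meridian of $V'$ disjoint from the union exists only when $(p,q)$ is meridional, and even then the complementary annulus contains no longitude, so no dual curve $E$ for $M$ can be found there. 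Thus every case is ``obstructed,'' and the entire content of the theorem sits in the part of your argument that is only sketched.

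The sketch itself has a genuine gap that you partly flag. You propose to induct on $\min|\partial M\cap(C\cup C')|$, perform the arc surgeries of Section \ref{sec2} to replace $C$ by curves of smaller complexity that are c-connected to $C$ with common dual disk $D$, and then assert that ``by the inductive hypothesis these c-connections are in fact s-connections.'' This is circular: upgrading a disjoint pair with a common dual disk to a separated chain is precisely the statement being proved (the paper stresses in the introduction that disjoint curves with a common dual disk need not be separated), and the consecutive pairs in your c-connecting sequence have not been shown to have strictly smaller complexity than the pair $(C,C')$ you started with---so the inductive hypothesis does not apply to them. What is missing is a complexity that provably decreases together with explicit separated configurations at the bottom. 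The paper supplies exactly this: it classifies $\alpha\cup\beta\cup D_{+}\cup D_{-}$ by its slope $(p,q)$ on $\Sigma'$, runs a Euclidean-algorithm induction by inserting an explicit ``stepping stone'' arc $\beta_{r,s}$ with $ps-qr=1$ so that both new pairs have slopes $(r,s)$ and $(p-r,q-s)$ strictly smaller than $(p,q)$, and disposes of the base cases (meridian, longitude, $(1,0)$, $(1,1)$) by explicit pictures of separating dual disks. Your proposal correctly locates where the difficulty lies but does not resolve it.
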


\begin{proof}
By Theorem \ref{thm2.4}, $C$ and $C'$ are c-connected (with an arbitrary choice of dual disks of $C$ and $C'$), and hence without loss of generality we may assume that $C$ and $C'$ are disjoint and have a common dual disk.

Let $D$ be a common dual disk of $C$ and $C'$.
Cutting $V$ along $D$, we have a solid torus $V'$ with two copies $D_{+}$ and $D_{-}$ of $D$ on $\Sigma' = \partial V'$.
The primitive curve $C$ is cut into an arc $\alpha$ with one of its endpoints in $\partial D_{+}$ and the other in $\partial D_{-}$.
The primitive curve $C'$ is also cut into an arc $\beta$ with its endpoints in $\partial D_{+}$ and $\partial D_{-}$ respectively.
The union $\alpha \cup \beta \cup D_{+} \cup D_{-}$ is homotopy equivalent to an inessential loop in $\Sigma'$ only if $C$ and $C'$ are isotopic.
So we may assume that $\alpha \cup \beta \cup D_{+} \cup D_{-}$ is homotopy equivalent to an essential loop in the torus $\Sigma' = \partial V'$.

\vspace{0.2cm}

Suppose that $\alpha \cup \beta \cup D_{+} \cup D_{-}$ is homotopy equivalent to a meridian.
See Figure \ref{fig12}.
Figure \ref{fig13} illustrates that $C$ and $C'$ are s-connected via $C''$ and $C'''$.

\begin{figure}[!hbt]
\includegraphics[width=13cm,clip]{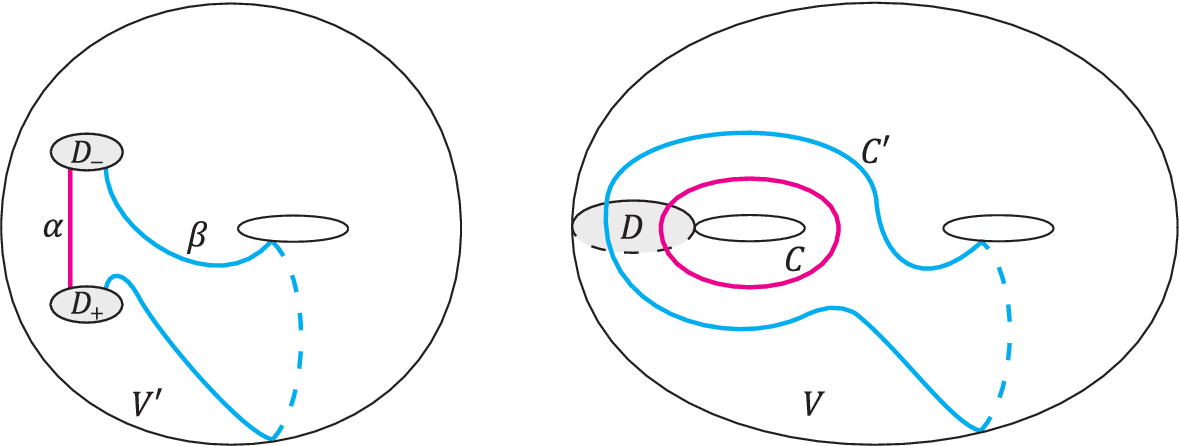}
\caption{$\alpha \cup \beta \cup D_{+} \cup D_{-}$ is homotopy equivalent to a meridian.}
\label{fig12}
\end{figure}

\begin{figure}[!hbt]
\includegraphics[width=15cm,clip]{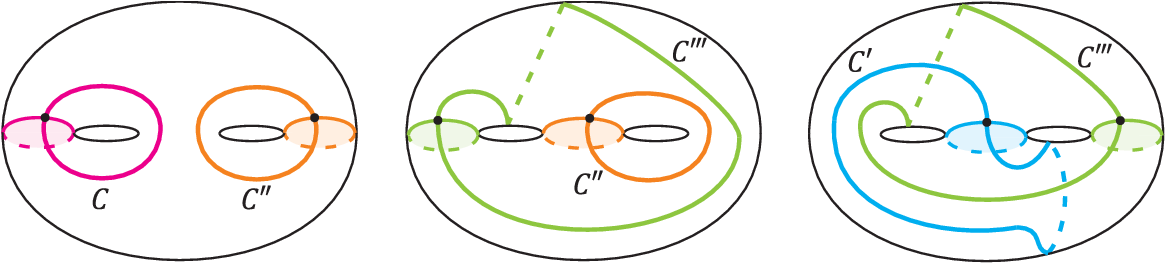}
\caption{$C$ and $C'$ are $s$-connected via $C''$ and $C'''$.}
\label{fig13}
\end{figure}

Suppose that $\alpha \cup \beta \cup D_{+} \cup D_{-}$ is homotopy equivalent to a longitude.
Figure \ref{fig14} illustrates that $C$ and $C'$ are separated.
So they are s-connected directly.

\begin{figure}[!hbt]
\includegraphics[width=13cm,clip]{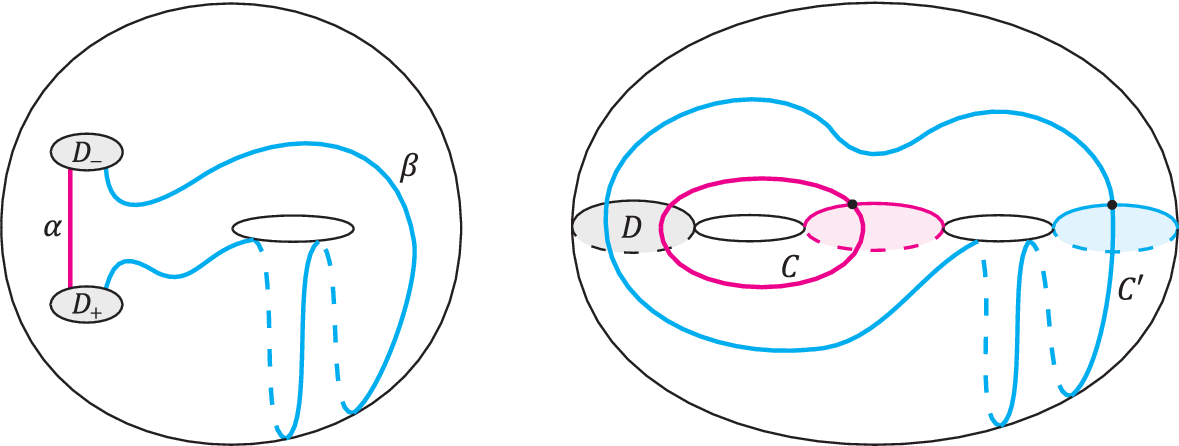}
\caption{$C$ and $C'$ are separated.}
\label{fig14}
\end{figure}

Suppose that $\alpha \cup \beta \cup D_{+} \cup D_{-}$ is homotopy equivalent to a $(p, q)$-torus knot $K_{p, q}$ with $p \ge 2$ and $p > q$ without loss of generality.
Here $K_{p, q}$ winds $V'$ $p$ times in longitudinal direction and $q$ times in meridional direction.

We use induction on the absolute values of $p$ and $q$.
For that purpose, we consider two types of arcs $\beta_{p, q}$ and $\beta'_{p, q}$ as in Figure \ref{fig15} (a) and Figure \ref{fig15} (b) respectively.
Both $\beta_{p, q}$ and $\beta'_{p, q}$ have one of their endpoints in $\partial D_{+}$ and the other in $\partial D_{-}$.
The arc $\beta_{p, q}$ is disjoint from $\alpha$, but $\beta'_{p, q}$ intersects $\alpha$ in a single point, cutting off a small triangular disk together with $D_{-}$.
Both $\beta_{p, q} \cup \alpha \cup D_{+} \cup D_{-}$ and $\beta'_{p, q} \cup \alpha \cup D_{+} \cup D_{-}$ are homotopy equivalent to $K_{p, q}$.
For an example, $(p, q) = (7, 5)$ in Figure \ref{fig15}.

\begin{figure}[!hbt]
\includegraphics[width=15cm,clip]{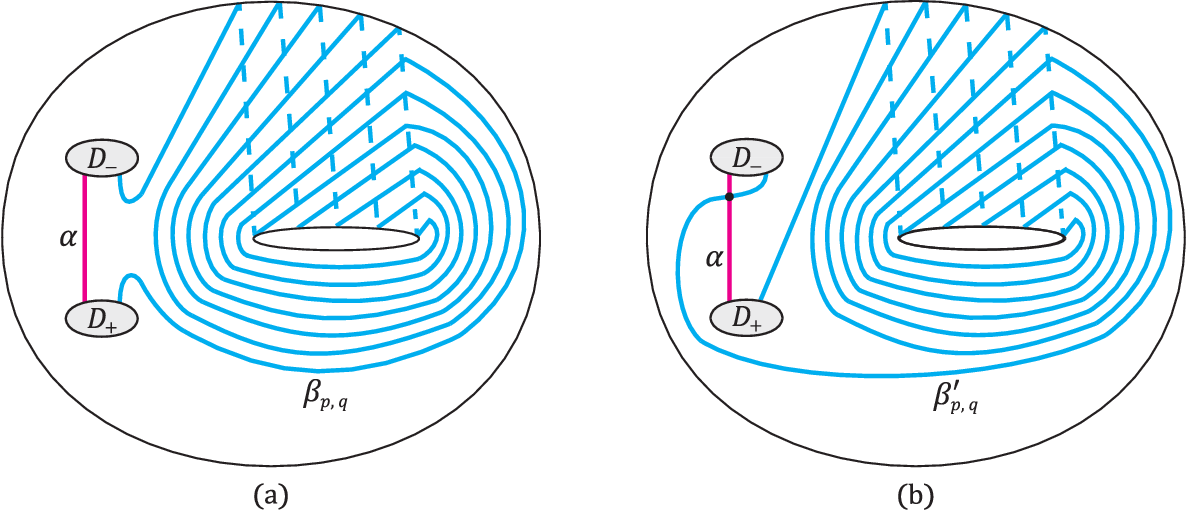}
\caption{(a) $| \beta_{p, q} \cap \alpha | = 0$ and (b) $| \beta'_{p, q} \cap \alpha | = 1$.}
\label{fig15}
\end{figure}

Let $r$ and $s$ be positive integers satisfying $ps - qr = 1$ and $r < p$ and $s \le q$.
Since $\det
\left(
\begin{smallmatrix}
p & r \\
q & s
\end{smallmatrix}
\right)
= ps - qr = 1$, $| K_{p, q} \cap K_{r, s} | = 1$.
For $\beta_{p, q}$ and $\beta'_{p, q}$, we take arcs $\beta_{r, s}$ and $\beta'_{r, s}$ respectively winding $r$ times in longitudinal direction and $s$ times in meridional direction and $| \beta_{r, s} \cap \beta_{p, q} | = | \beta'_{r, s} \cap \beta'_{p, q} | = 1$ as in Figure \ref{fig16} (a) and Figure \ref{fig16} (b).
Both $\beta_{r, s}$ and $\beta'_{r, s}$ have one of their endpoints in $\partial D_{+}$ and the other in $\partial D_{-}$.
Moreover, $| \beta_{r, s} \cap \alpha | = 0$ and $| \beta'_{r, s} \cap \alpha | = 1$.
The arc $\beta_{r, s}$ is like a stepping stone between $\alpha$ and $\beta_{p, q}$ and $\beta'_{r, s}$ is like a stepping stone between $\alpha$ and $\beta'_{p, q}$.

\begin{figure}[!hbt]
\includegraphics[width=15cm,clip]{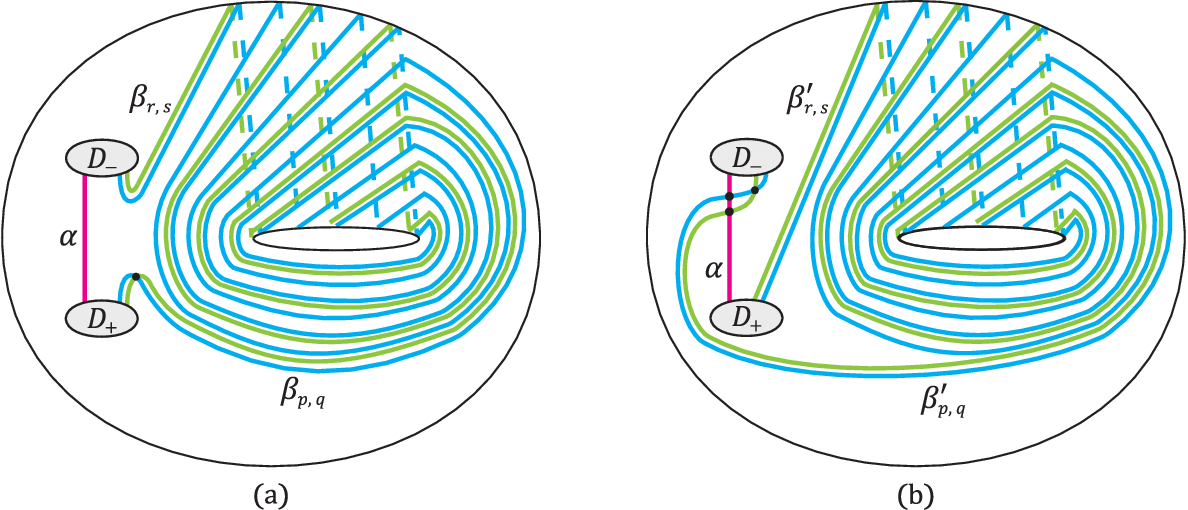}
\caption{(a) $| \beta_{r, s} \cap \alpha | = 0$ and (b) $| \beta'_{r, s} \cap \alpha | = 1$, and $| \beta_{r, s} \cap \beta_{p ,q} | = | \beta'_{r,s} \cap \beta'_{p ,q} | = 1$.}
\label{fig16}
\end{figure}

The disk $D_{-}$ in Figure \ref{fig16} (a) is isotoped along two long parallel subarcs of $\beta_{r, s}$ and $\beta_{p, q}$ as in Figure \ref{fig17} (a) so that $\beta_{r, s}$ becomes a short arc and $\beta_{p, q}$ winds $p - r$ times in longitudinal direction.
Denote the isotoped $\beta_{r, s}$ and $\beta_{p, q}$ by $\alpha_0$ and $\beta'_{p - r, t}$ for some $t < q$ respectively.
Similarly, $D_{+}$ in Figure \ref{fig16} (b) is isotoped along two long parallel subarcs of  $\beta'_{r, s}$ and $\beta'_{p, q}$ as in Figure \ref{fig17} (b) so that $\beta'_{r, s}$ becomes a short arc and $\beta'_{p, q}$ winds $p - r$ times in longitudinal direction.
Denote the isotoped $\beta'_{r, s}$ and $\beta'_{p, q}$ by $\alpha_0$ and $\beta'_{p - r, t}$ for some $t < q$ respectively.
In both cases, $\alpha_0 \cup \beta'_{p - r, t} \cup D_{+} \cup D_{-}$ is homotopy equivalent to a $K_{p - r, t}$.
Since $K_{p - r, t}$ intersects the original $K_{p, q}$ in a single point and
$\det
\left(
\begin{smallmatrix}
p & p - r \\
q & q - s
\end{smallmatrix}
\right)
= p(q - s) - q(p - r) = -1$, we have that $t = q - s$.
Hence the longitudinal and meridional parameters of $\beta_{p, q}$ with respect to $\beta_{r, s}$ and those of $\beta'_{p, q}$ with respect to $\beta'_{r, s}$ are $(p - r, q - s)$.
In the example of Figure \ref{fig15}, Figure \ref{fig16}, Figure \ref{fig17}, $(p, q) = (7, 5)$ and $(r, s) = (4, 3)$ and $(p - r, q - s) = (3, 2)$.

\begin{figure}[!hbt]
\includegraphics[width=15cm,clip]{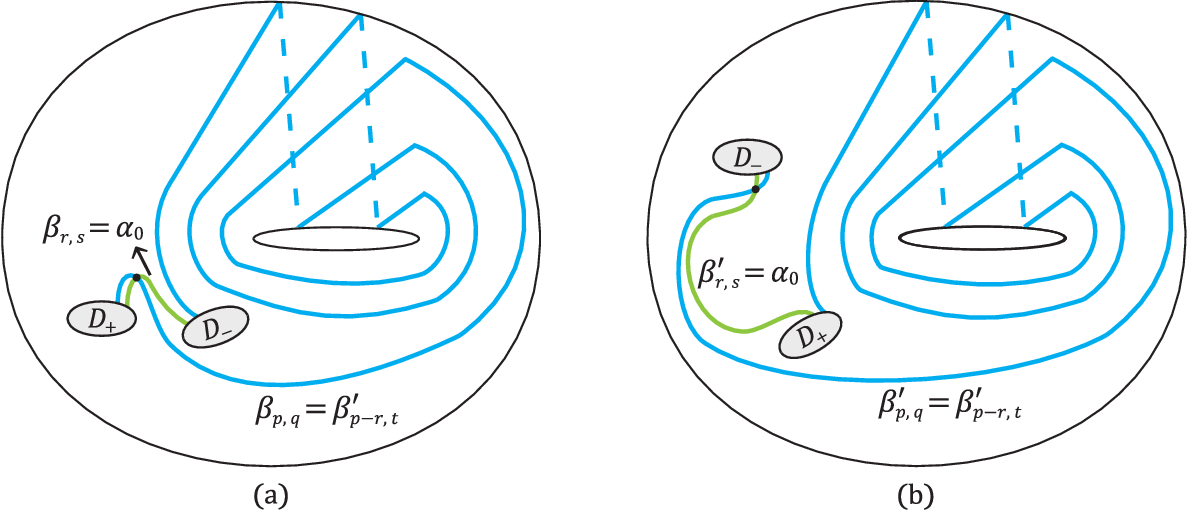}
\caption{$\beta'_{p - r, t} = \beta'_{p - r, q - s}$.}
\label{fig17}
\end{figure}

Now the initial conditions of the induction, $(p, q) = (1, 0)$ and $(1, 1)$ are remained.
We only need to consider $| \beta'_{1,0} \cap \alpha | = | \beta'_{1,1} \cap \alpha | = 1$ because we already dealt the cases of $| \beta_{1,0} \cap \alpha | = | \beta_{1,1} \cap \alpha | = 0$ (Figure \ref{fig14}).
Figure \ref{fig18} shows that $C$ and $C'$ are s-connected via $C''$, where $C$ and $C'$ are obtained from $\alpha$ and $\beta'_{1,0}$ respectively.
(It is easy to see that $C$ and $C''$ are separated.)
Figure \ref{fig19} shows that $C$ and $C'$ are s-connected via $C''$, where $C$ and $C'$ are obtained from $\alpha$ and $\beta'_{1,1}$ respectively.
(It is easy to see that $C$ and $C''$ are separated.)

\begin{figure}[!hbt]
\includegraphics[width=15cm,clip]{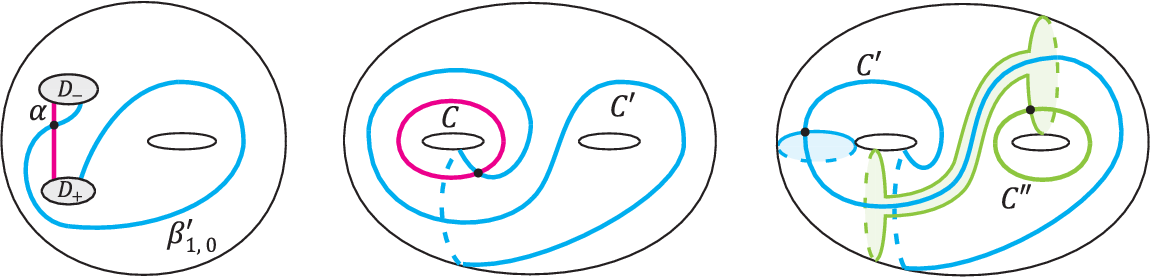}
\caption{$C$ and $C'$ are s-connected via $C''$.}
\label{fig18}
\end{figure}

\begin{figure}[!hbt]
\includegraphics[width=15cm,clip]{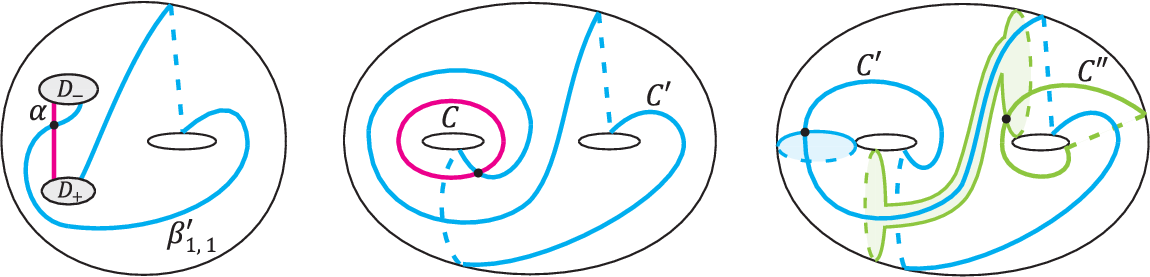}
\caption{$C$ and $C'$ are s-connected via $C''$.}
\label{fig19}
\end{figure}

Let $C$, $C''$, $C'$ be the primitive curves in $\partial V$ obtained by identifying endpoints of $\alpha$, $\beta_{r, s}$(or $\beta'_{r, s}$), $\beta_{p, q}$(or $\beta'_{p, q}$) respectively.
Since $r < p$ and $s \le q$, $C$ and $C''$ are s-connected by our induction hypothesis.
Since $p - r < p$ and $q - s < q$, $C''$ and $C'$ are s-connected by our induction hypothesis.
Hence $C$ and $C'$ are s-connected.
\end{proof}

\section{Proof of the main theorem in the case of $g \ge 3$}\label{sec4}

Throughout the section, $V$ will be assumed to be a genus-$g$ handlebody for $g \ge 3$.
For a primitive curve $C$ in $\Sigma = \partial V$ with a dual disk $D$, we call the pair $(C, D)$ simply a {\em dual pair} for $V$.
For convenience, we simply say that two dual pairs $(C, D)$ and $(C', D')$ {\em are p-connected} if there exists a sequence $(C, D) = (C_1, D_1), (C_2, D_2), \ldots, (C_k, D_k) = (C', D')$ of dual pairs for some $k$ such that $C_i \cup D_i$ and $C_{i+1} \cup D_{i+1}$ are disjoint for each $i \in \{ 1, 2, \ldots, k-1 \}$.
It is obvious that if two dual pairs $(C, D)$ and $(C', D')$ are p-connected then $C$ and $C'$ are s-connected.
The following Lemma \ref{lem4.1}, Lemma \ref{lem4.2}, and Lemma \ref{lem4.3} hold only when $g \ge 3$.

\begin{lemma}[Common primitive curve]\label{lem4.1}
Let $(C, D)$ and $(C, D')$ be dual pairs for $V$.
Suppose that $D$ and $D'$ are disjoint.
Then  $(C, D)$ and $(C, D')$ are p-connected.
\end{lemma}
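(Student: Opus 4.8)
The plan is to reduce the lemma to the construction of a single auxiliary dual pair. Specifically, it suffices to produce a dual pair $(C^*, E)$ for $V$ such that $C^* \cup E$ is disjoint from $C \cup D \cup D'$: then $C^* \cup E$ is disjoint from both $C \cup D$ and $C \cup D'$, so that $(C, D), (C^*, E), (C, D')$ is a p-connection of length three. Note that consecutive dual pairs in a p-connection must have distinct curves, so one cannot keep the curve equal to $C$ throughout; the role of the hypothesis $g \ge 3$ is precisely to provide enough room to leave $C$ and return to it.

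To build $(C^*, E)$, let $N = \N(C \cup D)$. Since $C$ meets $\partial D$ transversely in one point, $N$ is a solid torus, $N \cap \partial V$ is a once-punctured torus (a regular neighborhood of the wedge $C \cup \partial D$ of two transverse circles), and $\partial N \cap \overline{V \setminus N}$ is a single disk; hence $V = N \natural W$, where $W = \overline{V \setminus N}$. As $N$ is a solid torus, standard facts about boundary connected sums give that $W$ is a handlebody of genus $g - 1$, and this is $\ge 2$ exactly because $g \ge 3$. The disk $D'$ is disjoint from $D$ and meets $C$ in a single point, so $D' \cap N$ is one boundary half-disk and $D'_0 := \overline{D' \setminus N}$ is a properly embedded disk in $W$. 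Using that $W$ has genus at least $2$, I would then choose a non-separating disk $E$ in $W$ disjoint from $D'_0$ — for instance the meridian of a handle of $W$ contained in a positive-genus component of the handlebody obtained by cutting $W$ along $D'_0$ — together with a simple closed curve $C^* \subset \partial W$ meeting $\partial E$ transversely once and disjoint from $\partial D'_0$. Since a non-separating disk is essential, $(C^*, E)$ is a dual pair for $V$, and by construction $C^* \cup E$ lies in $W$ and avoids $C$, $D$, and $D'$. This is the required pair.

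The main obstacle is to make the two "using the genus" steps rigorous. First, one must verify carefully that $\overline{V \setminus \N(C\cup D)}$ is always a genus-$(g-1)$ handlebody, i.e. that the complement of a neighborhood of a dual pair is a handlebody of one less genus no matter how complicated the primitive curve $C$ is; the boundary connected sum argument sketched above does this cleanly, but it should be written out in full. Second, exhibiting $E$ and $C^*$ disjoint from $D'_0$ inside $W$ should be organized into cases according to whether $D'_0$ is non-separating, separating, or inessential in $W$, each time invoking $g - 1 \ge 2$ to locate a handle of $W$ disjoint from $D'_0$ on which to place both the meridian $E$ and the dual curve $C^*$. One also has to dispatch the minor points that $C^*$, arranged only to miss $\partial D'_0$ on $\partial W$, in fact misses all of $\partial D'$ on $\partial V$, and that $E$ misses $D'$ and not merely $D'_0$; both hold because $D' \setminus N$ is, up to isotopy, contained in $D'_0$. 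A more hands-on pictorial argument in the style of Sections \ref{sec2} and \ref{sec3} is also possible, but the structural approach above seems the most transparent.
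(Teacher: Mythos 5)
Your argument is correct, and it follows the same overall strategy as the paper: both proofs produce a single auxiliary dual pair disjoint from $C \cup D \cup D'$ and connect $(C,D)$ to $(C,D')$ through it in one intermediate step. The difference is in the decomposition used to exhibit that pair. The paper cuts $V$ along $D \cup D'$, so that $C$ falls into two arcs $\alpha_1,\alpha_2$ joining the disk copies, and finds a dual pair in the cut-open manifold (a genus-$(g-2)$ handlebody, or two handlebodies of total genus $g-1$) disjoint from the two contractible ``dumbbells'' $D_+\cup\alpha_1\cup D'_-$ and $D'_+\cup\alpha_2\cup D_-$; the hypothesis $g\ge 3$ enters by guaranteeing a positive-genus component in which to place the pair. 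You instead split off $\N(C\cup D)$ as a solid-torus boundary-connected summand, identify $W=\overline{V\setminus \N(C\cup D)}$ as a genus-$(g-1)\ge 2$ handlebody, and reduce to finding a dual pair in $W$ disjoint from the single properly embedded disk $D'_0=\overline{D'\setminus \N(C\cup D)}$. Your route makes explicit the standard fact that the complement of a dual pair is a handlebody of one lower genus (which the paper uses implicitly elsewhere, e.g.\ in Lemma \ref{lem4.2}) and turns the avoidance problem into the cleaner one of avoiding a single disk, at the cost of having to justify the boundary-connected-sum structure; the paper's cut treats $D$ and $D'$ symmetrically and needs only the observation that the sets to be avoided are contractible. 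When writing yours out, add the small point that $C^*$ and $\partial E$ must also be isotoped off the frontier disk $\partial\N(C\cup D)\cap W$ so that $C^*\subset\partial V$, $\partial E\subset\partial V$, and $E$ remains non-separating in $V$ (the latter holds because the frontier is a single disk); with that, your chain $(C,D),(C^*,E),(C,D')$ is exactly the paper's chain $(C,D),(C'',D''),(C,D')$.
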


\begin{proof}
Cut $V$ along $D \cup D'$.
The resulting manifold $V'$ is a genus-$(g-2)$ handlebody, or two handlebodies of genus $g_1$ and $g_2$ respectively, where $g_1 + g_2 = g-1$.
The primitive curve $C$ is cut into two arcs $\alpha_1$ and $\alpha_2$.
Whether $V'$ is connected or not, there are two copies $D_{+}$ and $D_{-}$ of $D$ and two copies $D'_{+}$ and $D'_{-}$ of $D'$ on $\Sigma' = \partial V'$.
One of $\alpha_1$ and $\alpha_2$, say $\alpha_1$, connects $D_{+}$ and $D'_{-}$ and the other $\alpha_2$ connects $D'_{+}$ and $D_{-}$.
Since each of $D_{+} \cup \alpha_1 \cup D'_{-}$ and $D'_{+} \cup \alpha_2 \cup D_{-}$ is homotopy equivalent to a point, we can take a dual pair $(C'', D'')$ in $V'$ disjoint from them.
See Figure \ref{fig20}.
Then $(C'', D'')$ can be regarded as a dual pair also in $V$, and $(C, D)$ and $(C, D')$ are p-connected via $(C'', D'')$.
\end{proof}

\begin{figure}[!hbt]
\includegraphics[width=13cm,clip]{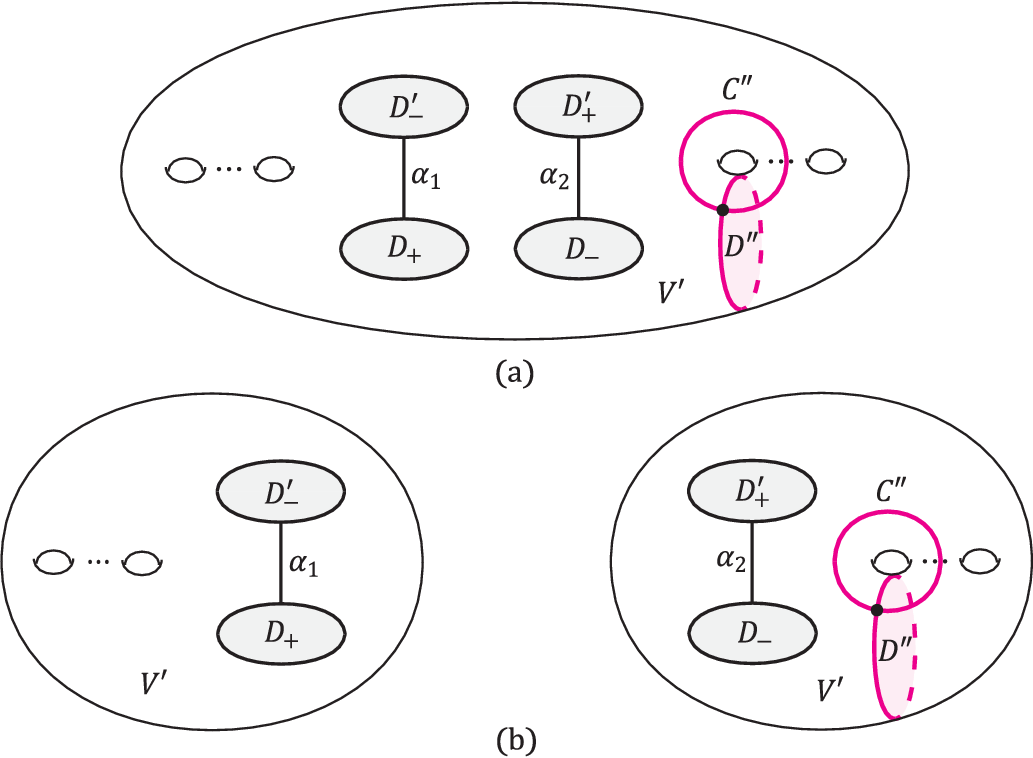}
\caption{(a) $V'$ is a handlebody and (b) $V'$ is two handlebodies.}
\label{fig20}
\end{figure}

\begin{lemma}[Common dual disk]\label{lem4.2}
Let $(C, D)$ and $(C', D)$ be dual pairs for $V$.
Suppose that $C$ and $C'$ are disjoint.
Then  $(C, D)$ and $(C', D)$ are p-connected.
\end{lemma}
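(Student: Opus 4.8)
The plan is to cut $V$ along the common dual disk $D$. This produces a handlebody $V'$ of genus $g-1\ge 2$ whose boundary $\Sigma'=\partial V'$ carries two disk faces $D_{+},D_{-}$ coming from $D$, and the curves $C,C'$ are cut into disjoint simple arcs $\alpha,\beta$ on $\Sigma'$, each running from $\partial D_{+}$ to $\partial D_{-}$. The initial observation is that, since $\partial D_{\pm}$ bound the trivial disk faces $D_{\pm}$ in $\Sigma'$, one may slide $D_{-}$ along $\alpha$ to shrink $\alpha$ to a short standard arc; in particular $\N(\alpha\cup D_{+}\cup D_{-})$ is a disk, and the complement of its regular neighbourhood in $V'$ is again a handlebody of genus $g-1\ge 2$. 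Hence there is a dual pair $(C_{1},D_{1})$ of $V'$ — and therefore of $V$ — disjoint from $\alpha\cup D_{+}\cup D_{-}$, i.e.\ disjoint from $C\cup D$; this gives a legitimate first step $(C,D),(C_{1},D_{1})$ of a p-sequence, and symmetrically we may step away from $(C',D)$. (Note any pair adjacent to $(C,D)$ must be disjoint from all of $C\cup D$, since $C$ meets $D$; so the sequence is forced to detach from $D$ immediately and reattach only at the end.)

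The core of the argument is to connect two such detached pairs. I would first aim for the shortest possibility, a three-term sequence $(C,D),(C_{0},D_{0}),(C',D)$, which exists as soon as there is a dual pair $(C_{0},D_{0})$ of $V'$ disjoint from all of $\alpha\cup\beta\cup D_{+}\cup D_{-}$. The set $A=\N(\alpha\cup\beta\cup D_{+}\cup D_{-})\subset\Sigma'$ is an incompressible annulus — two trivial disks joined by two disjoint bands — and if its core $\gamma$ admits a disjoint meridian disk of $V'$, then one completes $(C_{0},D_{0})$ inside the surface $\Sigma'\setminus A$, which still has positive genus, and this case is finished. Observe that, after re-gluing $D_{+}$ to $D_{-}$, the curve $\gamma$ is a band sum of $C$ and $C'$ along $D$.

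The hard part, and the place where $g\ge 3$ is genuinely needed, is the case in which $\gamma$ is disk-busting in $V'$, so that no universal intermediate pair is available. Here I would imitate the arc-surgery inductions of Section~\ref{sec2}: fix a meridian disk $E$ of $V'$ disjoint from $D_{+}\cup D_{-}$ with $|\partial E\cap(\alpha\cup\beta)|$ minimal, surger $C$ along an outermost subarc of $\partial E$ cut off by $C$ — choosing, as in the proof of Lemma~\ref{lem2.3}, one that misses the point $C'\cap\partial E$ — and argue, using Lemma~\ref{lem2.1} together with Lemma~\ref{lem4.1} to move dual disks, that the resulting primitive curve is p-connected to $(C,D)$, is still disjoint from $C'$, and has strictly smaller complexity; iterating drives the configuration $\alpha\cup\beta$ into a single handle of $V'$ (available since $g-1\ge 2$), where $\gamma$ becomes non-disk-busting and the three-term construction applies. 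I expect the bookkeeping showing that each surgery really decreases complexity while preserving the p-relation to $C'$ to be the main obstacle; an alternative worth trying is to push the whole problem into the genus-$2$ handlebody $N=\overline{\N(C\cup D\cup C')}$ and quote Theorem~\ref{thm3.1}, once one checks that $N$ is unknotted in $V$.
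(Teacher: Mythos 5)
Your setup (cutting along $D$, stepping off $C\cup D$ and $C'\cup D$ to detached pairs, and finishing in one move when a dual pair of $V'$ misses $\alpha\cup\beta\cup D_+\cup D_-$) is sound and matches the easy half of the paper's argument. The genuine gap is in your hard case, and the strategy you sketch there cannot work as stated. A sequence produced by Lemma~\ref{lem2.1} (or by arc surgery as in Lemma~\ref{lem2.3}) consists of consecutive curves sharing the common dual disk $D$; for such consecutive terms $(C_i,D)$ and $(C_{i+1},D)$ the sets $C_i\cup D$ and $C_{i+1}\cup D$ are never disjoint, so such a sequence is a c-connection, not a p-connection. Upgrading a c-connection with a common dual disk to a p-connection is precisely the content of Lemma~\ref{lem4.2} itself, so invoking Lemma~\ref{lem2.1} ``together with Lemma~\ref{lem4.1} to move dual disks'' is circular: Lemma~\ref{lem4.1} only handles one fixed curve with two disjoint dual disks and gives no way to change the curve while keeping the unions disjoint. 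Your fallback via $N=\overline{\N(C\cup D\cup C')}$ has the same defect plus another: Theorem~\ref{thm3.1} produces intermediate curves on $\partial N$, most of which do not lie on $\Sigma=\partial V$, and it yields s-connectivity rather than p-connectivity, so it does not transplant.

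The paper resolves the hard case by a different induction that never requires a single pair avoiding both $C\cup D$ and $C'\cup D$. It fixes one pair $(C'',D'')$ disjoint from $\alpha'\cup D_+\cup D_-$ (your first detached step from $(C',D)$) and then reduces $|(C''\cup D'')\cap\alpha|$ one point at a time: take the intersection point closest to the endpoint $c_+$ of $\alpha$ on $\partial D_+$, form a parallel copy of $C''$ (or of $D''$, according to which object contains that point) and slide it across $D_+$ along the subarc of $\alpha$, so that the copy meets $\alpha$ one fewer time. The old and new copies cobound an annulus (resp.\ a product region) containing $D_+$, and because $V'$ has genus $g-1\ge2$ one finds a third dual pair disjoint from that region together with $D_-$, which p-connects the old and new pairs inside $V$. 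Iterating lands on a pair disjoint from $\alpha\cup D_+\cup D_-$, hence from $C\cup D$. If you want to salvage your write-up, replace the arc-surgery step with this ``jump a parallel copy across $D_+$'' move; it is the one place the argument genuinely uses $g\ge3$.
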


\begin{proof}
Cut $V$ along $D$.
Then we have a genus-$(g-1)$ handlebody $V'$ with two copies $D_{+}$ and $D_{-}$ of $D$ on $\Sigma' = \partial V'$.
The primitive curves $C$ and $C'$ are cut into arcs $\alpha$ and $\alpha'$ in $\Sigma'$ respectively such that for each arc one endpoint is in $\partial D_{+}$ and the other endpoint is in $\partial D_{-}$.
Since $D_{+} \cup \alpha' \cup D_{-}$ is homotopy equivalent to a point, we can take a dual pair $(C'', D'')$ in $V'$ disjoint from it.
Then $(C'', D'')$ can be regarded as a dual pair also in $V$, and $(C', D)$ and $(C'', D'')$ are p-connected.
If $(C'' \cup D'') \cap \alpha = \emptyset$, then $(C'', D'')$ and $(C, D)$ are also p-connected and we are done.
So we may assume that $(C'' \cup D'') \cap \alpha \ne \emptyset$.

We use induction on $| (C'' \cup D'') \cap \alpha |$.
Let $p$ be a point of $(C'' \cup D'') \cap \alpha$ which is closest to one of the two endpoints of $\alpha$, say $c_{+}$, in $\partial D_{+}$.
There are two cases.

\vspace{0.3cm}

\noindent Case $1$. $p \in C''$.

Let $C''_1 = C''$, and $C''_2$ be a parallel copy of $C''_1$ such that a point $q \in C''_2 \cap \alpha$ is closer to $c_{+}$ than $p$.
Let $\alpha_{+}$ be the subarc of $\alpha$ from $q$ to $c_{+}$.
Slide a small neighborhood of $q$ in $C''_2$ along $\alpha_{+} \cup D_{+}$.
Then $C''_1 \cup C''_2$ bounds an annulus $A$ containing $D_{+}$.
See Figure \ref{fig21}.

\begin{figure}[!hbt]
\includegraphics[width=10.5cm,clip]{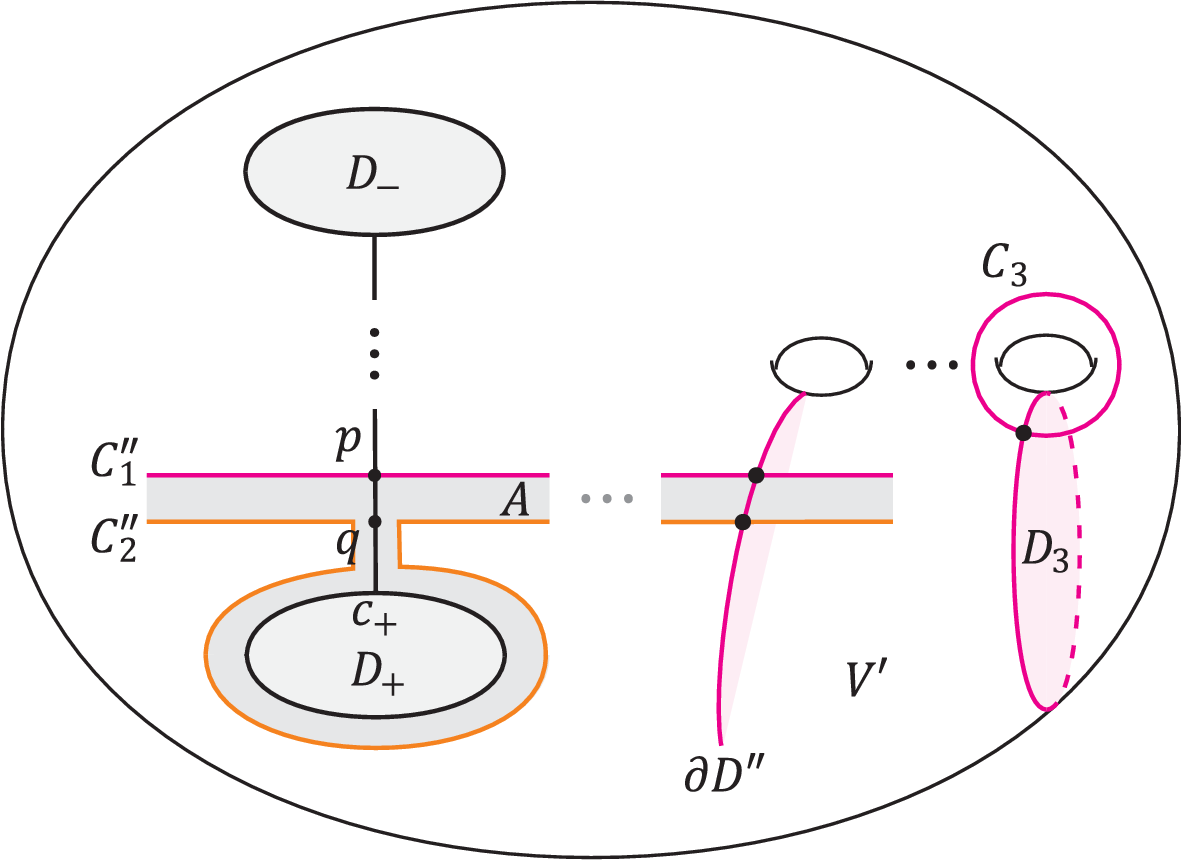}
\caption{The dual pairs $(C''_1, D'')$ and $(C''_2, D'')$.}
\label{fig21}
\end{figure}

Since $(C''_1, D'')$ is a dual pair in a genus-$(g-1)$ handlebody $V'$, and $C''_1 \cup C''_2$ bounds an annulus, and $D_{-}$ is disjoint from $D'' \cup A$, we can take a new dual pair $(C_3, D_3)$ in $V'$ disjoint from $D'' \cup A \cup D_{-}$.
All of $(C''_1, D'')$ and $(C''_2, D'')$ and $(C_3, D_3)$ can be regarded as dual pairs in $V$ because they are disjoint from $D_{+} \cup D_{-}$.
The dual pairs $(C''_1, D'')$ and $(C''_2, D'')$ are p-connected via $(C_3, D_3)$.
Since $| (C''_2 \cup D'') \cap \alpha | = | (C''_1 \cup D'') \cap \alpha | - 1$, by an inductive argument $(C''_2, D'')$ and $(C, D)$ are p-connected.

\vspace{0.3cm}

\noindent Case $2$. $p \in \partial D''$.

Let $D''_1 = D''$, and $D''_2$ be a parallel copy of $D''_1$ such that a point $q \in \partial D''_2 \cap \alpha$ is closer to $c_{+}$ than $p$.
Let $\alpha_{+}$ be the subarc of $\alpha$ from $q$ to $c_{+}$.
Slide a small neighborhood of $q$ in $D''_2$ along $\alpha_{+} \cup D_{+}$.
Then $D''_1 \cup D''_2$ bounds a region $B$ in $V'$ that is homeomorphic to $D'' \times I$ containing $D_{+}$.

Since $(C'', D''_1)$ is a dual pair in a genus-$(g-1)$ handlebody $V'$, and $D''_1 \cup D''_2$ bounds a $3$-ball, and $D_{-}$ is disjoint from $B \cup C''$, we can take a new dual pair $(C_3, D_3)$ in $V'$ disjoint from $B \cup C'' \cup D_{-}$.
All of $(C'', D''_1)$ and $(C'', D''_2)$ and $(C_3, D_3)$ can be regarded as dual pairs in $V$ because they are disjoint from $D_{+} \cup D_{-}$.
The dual pairs $(C'', D''_1)$ and $(C'', D''_2)$ are p-connected via $(C_3, D_3)$.
Since $| (C'' \cup D''_2) \cap \alpha | = | (C'' \cup D''_1) \cap \alpha | - 1$, by an inductive argument $(C'', D''_2)$ and $(C, D)$ are p-connected.
\end{proof}

From second paragraph to the end of the proof of Lemma \ref{lem4.2}, we observe the following lemma (with different notations to avoid confusion).

\begin{lemma}\label{lem4.3}
Let $(C_1, D_1)$ and $(C_2, D_2)$ be dual pairs for $V$.
Suppose that $(C_2 \cup D_2) \cap D_1 = \emptyset$.
Then  $(C_1, D_1)$ and $(C_2, D_2)$ are p-connected.
\end{lemma}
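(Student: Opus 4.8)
The plan is to recognise that Lemma~\ref{lem4.3} is exactly the assertion proved in the second through the final paragraphs of the proof of Lemma~\ref{lem4.2}, once one notes that that argument never used anything about the second primitive curve there except the arc obtained by cutting a primitive curve along one of its dual disks. So I would run the same induction, with the roles of the pairs ``$(C,D)$'' and ``$(C'',D'')$'' from Lemma~\ref{lem4.2} now played by $(C_1,D_1)$ and $(C_2,D_2)$ respectively.

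First I would cut $V$ along $D_1$ to obtain a genus-$(g-1)$ handlebody $V'$; note $g-1\ge 2$ because $g\ge 3$. There are two copies $D_{1+}$ and $D_{1-}$ of $D_1$ on $\Sigma'=\partial V'$, and $C_1$ is cut into an arc $\alpha$ with one endpoint $c_+\in\partial D_{1+}$ and the other $c_-\in\partial D_{1-}$. Since $(C_2\cup D_2)\cap D_1=\emptyset$ by hypothesis, cutting does not affect the pair $(C_2,D_2)$, which is therefore a dual pair for $V'$ as well as for $V$. If moreover $(C_2\cup D_2)\cap\alpha=\emptyset$, then $C_1\cup D_1$ and $C_2\cup D_2$ are disjoint in $V$, so the length-two sequence $(C_1,D_1),(C_2,D_2)$ already exhibits them as p-connected. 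Otherwise I would induct on $n=|(C_2\cup D_2)\cap\alpha|$.

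For the inductive step, let $p$ be the point of $(C_2\cup D_2)\cap\alpha$ closest to $c_+$ along $\alpha$, and distinguish the cases $p\in C_2$ and $p\in\partial D_2$, as in Lemma~\ref{lem4.2}. If $p\in C_2$, take a parallel copy $C_2'$ of $C_2$ meeting $\alpha$ at a point $q$ lying between $c_+$ and $p$, slide a small neighborhood of $q$ in $C_2'$ along $\alpha_+\cup D_{1+}$, where $\alpha_+$ is the subarc of $\alpha$ from $q$ to $c_+$, so that $C_2\cup C_2'$ bounds an annulus $A$ containing $D_{1+}$; using that $V'$ has genus at least $2$ and that $D_{1-}$ is disjoint from $D_2\cup A$, choose a dual pair $(C_3,D_3)$ for $V'$ disjoint from $D_2\cup A\cup D_{1-}$. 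All of $(C_2,D_2)$, $(C_2',D_2)$, $(C_3,D_3)$ are dual pairs for $V$ since they avoid $D_1$; the first two are p-connected via $(C_3,D_3)$, and $|(C_2'\cup D_2)\cap\alpha|=n-1$, so $(C_2',D_2)$ is p-connected to $(C_1,D_1)$ by induction and concatenation finishes this case. If $p\in\partial D_2$, the analogous move on a parallel copy $D_2'$ of $D_2$ yields a region $B\cong D_2\times I$ bounded by $D_2\cup D_2'$ and containing $D_{1+}$, and a dual pair $(C_3,D_3)$ for $V'$ disjoint from $B\cup C_2\cup D_{1-}$ p-connects $(C_2,D_2)$ to $(C_2,D_2')$, again dropping $n$ by one.

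The main obstacle is really the bookkeeping inside the inductive step: performing the ``parallel copy and slide'' so that the annulus $A$ (respectively the region $B$) genuinely engulfs $D_{1+}$ while everything in sight stays disjoint from $D_1=D_{1+}\cup D_{1-}$, and verifying that the auxiliary dual pair $(C_3,D_3)$ can be chosen in $V'$ with all the listed disjointness --- and this is where the hypothesis $g-1\ge 2$ enters. Since this is precisely the verification already carried out in the proof of Lemma~\ref{lem4.2}, the only genuinely new point to record is that that verification referred to nothing beyond the arc $\alpha$ coming from a primitive curve together with a dual pair disjoint from the two copies of the cutting disk; granting this, the proof of Lemma~\ref{lem4.3} is complete.
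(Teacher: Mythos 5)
Your proposal is correct and is exactly the paper's own argument: the paper proves Lemma \ref{lem4.3} by observing that the second paragraph through the end of the proof of Lemma \ref{lem4.2} never uses anything about the pair $(C'',D'')$ beyond its disjointness from the cutting disk, which is precisely the reduction you make. You have simply written out in full the induction that the paper cites by reference.
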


Now we are ready to prove Theorem \ref{thm1.1} in the case of $g \ge 3$.
Actually we show a slightly stronger version of it as follows.

\begin{theorem}\label{thm4.4}
Let $(C, D)$ and $(C', D')$ be dual pairs for a genus-$g$ handlebody $V$ with $g \ge 3$.
Then $(C, D)$ and $(C', D')$ are p-connected.
\end{theorem}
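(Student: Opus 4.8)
The plan is to reduce the general case, step by step, to the situations already handled by Lemma \ref{lem4.1}, Lemma \ref{lem4.2}, and Lemma \ref{lem4.3} (equivalently, by the proof of Lemma \ref{lem4.2}). Start with arbitrary dual pairs $(C, D)$ and $(C', D')$. By the standard disk surgery argument there is a sequence $D = D_1, D_2, \ldots, D_n = D'$ of non-separating disks in $V$ with $D_j$ and $D_{j+1}$ disjoint for each $j$. Choosing a primitive curve $C_j$ with dual disk $D_j$ for each intermediate $j$ (and $C_1 = C$, $C_n = C'$), it suffices to prove the theorem for consecutive pairs, so we may assume from the outset that $D$ and $D'$ are \emph{disjoint}. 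This is exactly the same opening move as in the proof of Theorem \ref{thm2.4}.

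Next I would use Lemma \ref{lem2.3} (arc surgery) twice to clean up the curves relative to the opposite dual disk. By Lemma \ref{lem2.3} there is a primitive curve $C''$ with $|C'' \cap \partial D'| \le 1$ such that $C$ and $C''$ are c-connected with common dual disk $D$; moreover each surgery in that lemma only changes the curve, not the disk, so in fact $(C, D)$ and $(C'', D)$ are p-connected — every intermediate curve shares the dual disk $D$, which stays disjoint from $D'$, so by Lemma \ref{lem4.3} consecutive pairs along the c-connecting sequence are p-connected. Symmetrically, get $(C', D')$ p-connected to $(C''', D')$ with $|C''' \cap \partial D| \le 1$. It remains to p-connect $(C'', D)$ and $(C''', D')$, where $D$ and $D'$ are disjoint, $|C'' \cap \partial D'| \le 1$, and $|C''' \cap \partial D| \le 1$.

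Now split into the two cases from the proof of Theorem \ref{thm2.4}. If, say, $|C'' \cap \partial D'| = 1$, then $D'$ is a common dual disk of $C''$ and $C'''$, so $(C'', D')$ and $(C''', D')$ are p-connected by Lemma \ref{lem4.2}; and $(C'', D)$ and $(C'', D')$ are p-connected by Lemma \ref{lem4.1} since $D$ and $D'$ are disjoint. If instead $|C'' \cap \partial D'| = |C''' \cap \partial D| = 0$, I would run exactly the arc-surgery reduction from the proof of Theorem \ref{thm2.4} on the pair $(C'', C''')$: doing surgery of $C''$ along arc components of $C'''$ that avoid the point $C''' \cap \partial D'$, producing curves $\Gamma_i$ with dual disk $D$, with $|\Gamma_i \cap \partial D'| = 0$ preserved and $|\Gamma_i \cap C'''|$ strictly decreasing, down to a curve $\Gamma$ with $|\Gamma \cap C'''| \le 1$. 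Each such surgery step keeps the dual disk $D$ fixed (disjoint from $D'$), so by Lemma \ref{lem4.3} the whole sequence is p-connected; thus $(C'', D)$ and $(\Gamma, D)$ are p-connected. Since $(\Gamma \cup D) \cap (C''' \cup D') = \Gamma \cap C'''$ has at most one point, a band sum of $D$ and $D'$ is a common dual disk $\widehat{D}$ of $\Gamma$ and $C'''$ disjoint from neither; using Lemma \ref{lem4.1} to pass from $(\Gamma, D)$ to $(\Gamma, \widehat D)$ and from $(C''', D')$ to $(C''', \widehat D)$, and Lemma \ref{lem4.2} to p-connect $(\Gamma, \widehat D)$ and $(C''', \widehat D)$ (after using Lemma \ref{lem2.1}/\ref{lem4.2} if $|\Gamma \cap C'''| = 1$ to make them disjoint first), completes the chain.

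The main obstacle is bookkeeping rather than a single hard idea: one must check carefully that every move made in the c-connectedness proofs of Section \ref{sec2} can be \emph{upgraded} to p-connectedness, i.e.\ that at each intermediate step the \emph{disk} being carried along can be kept disjoint from the relevant fixed disk so that Lemma \ref{lem4.3} applies, and that switching dual disks (via Lemma \ref{lem4.1}) and switching curves with a fixed dual disk (via Lemma \ref{lem4.2}) are interleaved correctly. The band-sum disk $\widehat D$ is non-separating because $|\Gamma \cap C'''| \le 1$ forces $\Gamma \cup C'''$ to be non-separating, so $\widehat D$ genuinely gives a dual pair; I would state this explicitly. With $g \ge 3$ all three auxiliary lemmas are available, which is precisely what makes the disk-tracking work in this range.
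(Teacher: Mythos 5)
Your overall strategy is sound and the opening (disk-surgery sequence to make $D$ and $D'$ disjoint, then Lemma \ref{lem2.3} to produce $C''$ with $|C''\cap\partial D'|\le 1$, upgraded to p-connectedness, then the case split on $|C''\cap\partial D'|$) matches the paper. The main divergence is in the case $|C''\cap\partial D'|=0$: there the paper simply observes that $(C''\cup D)\cap D'=\emptyset$ and invokes Lemma \ref{lem4.3} once, finishing in one line, whereas you re-run the entire arc-surgery-plus-band-sum machinery from the proof of Theorem \ref{thm2.4} and upgrade each step. Your longer route does work (the band-sum disk $\widehat D$ is a genuine dual disk of both $\Gamma$ and $C'''$, and Lemmas \ref{lem4.1} and \ref{lem4.2} let you switch disks and curves as you describe), but it is redundant: Lemma \ref{lem4.3} was designed exactly to absorb that work, and you even have the symmetric curve $C'''$ lying around unused in the paper's version of this case. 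What the paper's shortcut buys is brevity; what your version buys is nothing extra here, so you should notice that Lemma \ref{lem4.3} already applies to $(C',D')$ and $(C'',D)$ directly.

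Two citations need repair. First, you repeatedly invoke Lemma \ref{lem4.3} to p-connect consecutive pairs $(C_i,D)$ and $(C_{i+1},D)$ along a c-connecting sequence with common dual disk $D$; but Lemma \ref{lem4.3} requires $(C_2\cup D_2)\cap D_1=\emptyset$, which fails when $D_1=D_2=D$. The correct tool there is Lemma \ref{lem4.2} (disjoint curves sharing a dual disk), and your stated justification (``every intermediate curve shares the dual disk $D$'') is precisely its hypothesis, so this is a miscitation rather than a missing idea. Second, in the case $|C''\cap\partial D'|=1$ you apply Lemma \ref{lem4.2} directly to $C''$ and $C'''$ with common dual disk $D'$, but that lemma requires the two curves to be disjoint, which they need not be; you must first interpose Lemma \ref{lem2.2} to produce a c-connecting sequence with common dual disk $D'$ whose consecutive curves are disjoint, and only then apply Lemma \ref{lem4.2} to each consecutive pair (this is what the paper does, connecting $C''$ to $C'$ itself rather than to $C'''$). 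You show awareness of this issue in the other case, so the fix is routine, but as written the step is incomplete.
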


\begin{proof}
As in the proof of Theorem \ref{thm2.4}, we may assume that $D$ and $D'$ are disjoint.
By Lemma \ref{lem2.3}, there exists a sequence $C = C_1, C_2, \ldots, C_n = C''$ of primitive curves with a common dual disk $D$ such that $C_i$ and $C_{i+1}$ are disjoint for each $i \in \{ 1, 2, \ldots, n-1 \}$ and $| C'' \cap \partial D' | \le 1$.
Since $(C_i, D)$ and $(C_{i+1}, D)$ are p-connected for each $i \in \{ 1, 2, \ldots, n-1 \}$ by Lemma \ref{lem4.2}, $(C, D)$ and $(C'', D)$ are p-connected.
See Figure \ref{fig22}.
It remains to show that $(C'', D)$ and $(C', D')$ are p-connected.

\begin{figure}[!hbt]
\includegraphics[width=7cm,clip]{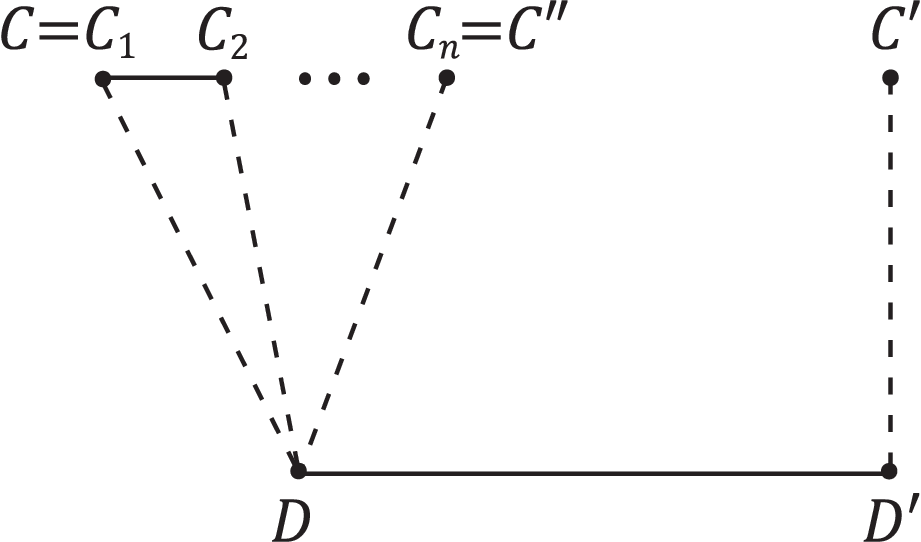}
\caption{$(C, D)$ and $(C'', D)$ are p-connected and $| C'' \cap \partial D' | \le 1$.}
\label{fig22}
\end{figure}

Suppose that $| C'' \cap \partial D' | = 1$.
Then $D'$ is a common dual disk of $C''$ and $C'$.
By Lemma \ref{lem2.2}, there exists a sequence $C'' = C_1, C_2, \ldots, C_k = C'$ of primitive curves (by an abuse of notations) with a common dual disk $D'$ such that $C_i$ and $C_{i+1}$ are disjoint for each $i \in \{ 1, 2, \ldots, k-1 \}$.
Since $(C_i, D')$ and $(C_{i+1}, D')$ are p-connected for each $i \in \{ 1, 2, \ldots, k-1 \}$ by Lemma \ref{lem4.2}, $(C'', D')$ and $(C', D')$ are p-connected.
Since $(C'', D)$ and $(C'', D')$ are p-connected by Lemma \ref{lem4.1}, $(C'', D)$ and $(C', D')$ are p-connected.

Now suppose that $| C'' \cap \partial D' | = 0$.
Then since $(C'' \cup D) \cap D' = \emptyset$, by Lemma \ref{lem4.3} $(C'', D)$ and $(C', D')$ are p-connected.
\end{proof}

\bibliographystyle{amsplain}

\end{document}